\theoremstyle{plain}
\newtheorem{thm}{\bf Theorem}[section]
\newtheorem{prop}[thm]{\bf Proposition}
\newtheorem{lem}[thm]{\bf Lemma}
\newtheorem{cor}[thm]{\bf Corollary}
\theoremstyle{definition}
\newtheorem{defn}[thm]{\bf Definition}
\theoremstyle{remark}
\newtheorem{rem}[thm]{\bf Remark}
\newtheorem{exam}[thm]{\bf Example}
\theoremstyle{example}
\def \supp{{\mathrm{supp}}}
\def \Supp{{\mathrm{Supp}}}
\def \s{{\mathrm{star}}}
\def \lk{\mathrm{lk}}
\def \n{\mathfrak n}
\def \a{\mathbf a}
\def \b{\mathbf b}
\def \u{\mathbf u}
\def \v{\mathbf v}
\def \M{\mathrm{M}}
\def \1{\mathbf 1}
\def \c{\mathbf{c}}
\def \Ass{\mathrm{Ass}}
\def \ass{\mathrm{ass}}
\def \Min{\mathrm{Min}}
\def \fpt{\mathrm{fpt}}
\def \infpt{\mathrm{infpt}}
\def \lk{\mathrm{link}}
\def \Del{\Delta}
\def \NN{\mathbb N}
\def \T{\mathcal T}
\def \ZZ{\mathbb Z}
\def \F{\mathcal F}
\def \M{\mathcal M}
\begin{document}

\title[]{Pretty $k$-clean monomial ideals and $k$-decomposable multicomplexes}
\author{Rahim Rahmati-Asghar}

\keywords{Pretty $k$-clean, monomial ideal, $k$-decomposable, multicomplex}

\subjclass[2000]{primary; 13C13, 13C14 Secondary; 05E99, 16W70}

\date{\today}

\begin{abstract}
We introduce pretty $k$-clean monomial ideals and $k$-decomposable multicomplexes, respectively, as the extensions of the notions of $k$-clean monomial ideals and $k$-decomposable simplicial complexes. We show that a multicomplex $\Gamma$ is $k$-decomposable if and only if its associated monomial ideal $I(\Gamma)$ is pretty $k$-clean. Also, we prove that an arbitrary monomial ideal $I$ is pretty $k$-clean if and only if its polarization $I^p$ is $k$-clean. Our results extend and generalize some results due to Herzog-Popescu, Soleyman Jahan and the current author.
\end{abstract}

\maketitle

\section*{Introduction}

Let $R$ be a Noetherian ring and $M$ be a finitely generated $R$-module. It is
well-known that there exists a so called \emph{prime filtration}
$$\F:0=M_0\subset M_1\subset \ldots\subset M_{r-1}\subset M_r=M$$
that is such that $M_i/M_{i-1}\cong R/P_i$ for some $P_i\in\Supp(M)$. The set $\{P_1,\ldots,P_r\}$ is called the \emph{support} of $M$ and denoted by $\Supp(\F)$. Let $\Min(M)$ denote the set of minimal prime ideals in $\Supp(M)$. Dress \cite{Dr} calls a prime filtration $\F$ of $M$ \emph{clean} if $\Supp(\F)=\Min(M)$. The module $M$ is called clean, if $M$ admits a clean filtration and $R$ is clean if it is a clean module over itself.

Herzog and Popescu \cite{HePo} introduced the concept of \emph{pretty clean} modules. A prime filtration
$$\F:0=M_0\subset M_1\subset \ldots\subset M_{r-1}\subset M_r=M$$
of $M$ with $M_i/M_{i-1}\cong R/P_i$ is called \emph{pretty clean}, if for all $i<j$ for which $P_i\subseteq P_j$ it follows
that $P_i=P_j$. The module $M$ is called pretty clean, if it has a pretty clean filtration. We say an ideal $I\subset R$ is clean (or pretty clean) if
$R/I$ is clean (or pretty clean).

Dress showed \cite{Dr} that a simplicial complex is shellable if and only if its Stanley-Reisner
ideal is clean. This result was extended in two different forms by Herzog and Popescu in \cite{HePo} and also by the current author in \cite{Ra}. Herzog and Popescu showed that a multicomplex is shellable if and only if its associated monomial ideal is pretty clean (see \cite[Theorem 10.5.]{HePo}) and we proved that a simplicial complex is $k$-decomposable if and only if its Stanley-Reisner ideal is $k$-clean (see \cite[Theorem 4.1.]{Ra}). Pretty cleanness and $k$-cleanness are, respectively, the algebraic counterpart of shellability for multicomplexes due to \cite{HePo} and $k$-decomposability for simplicial complexes due to Billera-Provan \cite{BiPr} and Woodroofe \cite{Wo}.
Soleyman Jahan proved that a monomial ideal is pretty clean if and only if its polarization is clean (see \cite[Theorem 3.10.]{So}). This yields a characterization of pretty clean monomial ideals, and it also
implies that a multicomplex is shellable if and only the simplicial complex corresponding to its polarization
is (non-pure) shellable. The purpose of this paper is to improve and generalize these results. To this end we introduce two notions: pretty $k$-clean monomial ideal and $k$-decomposable multicomplex. The first notion is as an extension of pretty clean monomial ideals as well as $k$-clean monomial ideals and the second one extends two notions shellable multicomplexes and $k$-decomposable simplicial complexes. The new constructions introduced here imply that pretty clean monomial ideals and shellable multicomplexes have recursive structures and, moreover, determine more details of their combinatorial properties.

The paper is organized as follows. In the first section, we review some preliminaries which are needed in the sequel. In the second section, we define pretty cleaner monomials, which naturally leads us to define pretty $k$-clean monomial ideals.  We show that

\textbf{Theorem \ref{Equ. pretty k-clean}.} A pretty $k$-clean monomial ideal is pretty clean and also every pretty clean monomial ideal is pretty $k$-clean for some $k\geq 0$.

This theorem implies that pretty $k$-cleanness is an extension of pretty cleanness and, moreover, since pretty $k$-clean monomial ideals have a recursive structure it follows that pretty clean ideals have such a property, too.

In the third section we define a class of multicomplexes, called $k$-decomposable multicomplexes and discuss some structural properties of them. We prove that

\textbf{Theorem \ref{shell k-decom}.} Every $k$-decomposable multicomplex is shellable and every shellable multicomplex is $k$-decomposable for some $k\geq 0$.

In Proposition \ref{Sim-multi k-decom} we show that our definition of $k$-decomposable multicomplexes extends the corresponding notion known for simplicial complexes due to  Billera and Provan \cite{BiPr} or Woodroofe \cite{Wo}.

The final section is devoted to the main results of the paper. As the first main result, we show that

\textbf{Theorem \ref{k-decom pretty}.} A multicomplex $\Gamma$ is $k$-decomposable if and only if its associated monomial ideal $I(\Gamma)$ is pretty $k$-clean.

This result generalizes Theorem 10.5 of \cite{HePo} and also Theorem 4.1 of \cite{Ra} and, moreover, it implies that Theorem \ref{shell k-decom} is a combinatorial translation of Theorem \ref{Equ. pretty k-clean}. To obtain the second main result of section 4, we first prove that a multicomplex is $k$-decomposable if and only if its polarization is $k$-decomposable (see Theorem \ref{polar k-decom}). This leads us to prove that

\textbf{Corollary \ref{polar of k-clean}.} A monomial ideal $I$ is pretty $k$-clean if and only if its polarization $I^p$ is $k$-clean.

This extends Theorem 3.10 of \cite{So} which says that an arbitrary monomial ideal $I$ is pretty clean if and only if its polarization is clean.

Our proofs here are often combinatorial and in this way we introduce the new features of the structure of pretty clean monomial ideals.

\section{preliminaries}

Let $S=K[x_1,\ldots,x_n]$ be the polynomial ring over a field $K$. Let $I\subset S$ be a monomial ideal. Set $\ass(I)=\Ass(S/I)$ and $\min(I)=\Min(S/I)$. A prime filtration of $I$ is of the form
$$\F:I=I_0\subset I_1\subset\ldots\subset I_r=S$$
with $I_j/I_{j-1}\cong S/P_j$, for $j=1,\ldots,r$ such that all $I_j$ are monomial ideals.

The prime filtration $\F$ is called \emph{clean}, if $\Supp(\F)=\min(I)$. Also, $\F$ is called \emph{pretty clean}, if for all $i<j$ which $P_i\subseteq P_j$ it
follows that $P_i=P_j$. The monomial ideal $I$ is called clean(or pretty clean), if it has a clean (or pretty clean) filtration. It was shown in \cite{HePo} that if $\F$ is a pretty clean filtration of $I$ then $\Supp(\F)=\ass(I)$.

Let $\Del$ be a simplicial complex on the vertex set $[n]:=\{x_1, \dots, x_n\}$. The set of facets (maximal faces) of $\Del$
is denoted by $\mathcal{F}(\Del)$ and if $\mathcal{F}(\Del)=\{F_1,\ldots,F_r\}$, we write $\Del=\langle F_1,\ldots,F_r\rangle$. For a monomial ideal $I$ of $S$, the set of minimal generators of $I$ is denoted by $G(I)$.

\begin{defn}
A simplicial complex $\Del$ is called \emph{shellable} if there exists an ordering $F_1,\ldots,F_m$ on the
facets of $\Delta$ such that for any $i<j$, there exists a vertex
$v\in F_j\setminus F_i$ and $\ell<j$ with
$F_j\setminus F_\ell=\{v\}$. We call $F_1,\ldots,F_m$ a \emph{shelling} for $\Del$.
\end{defn}

\begin{thm}
\cite{Dr} The simplicial complex $\Del$ is shellable if and only if its Stanley-Reisner ideal $I_\Del$ is
a clean monomial ideal.
\end{thm}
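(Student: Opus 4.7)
The plan is to exploit the standard primary decomposition of a squarefree monomial ideal: if $\mathcal{F}(\Del)=\{F_1,\ldots,F_m\}$, then $I_\Del=\bigcap_{i=1}^m P_{F_i}$ where $P_{F_i}=(x_j:x_j\notin F_i)$, and $\min(I_\Del)=\{P_{F_1},\ldots,P_{F_m}\}$, with each minimal prime appearing with multiplicity one in any prime filtration of $I_\Del$ (a consequence of $S/I_\Del$ being reduced at each associated prime).

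For the forward implication, assume $F_1,\ldots,F_m$ is a shelling. I would form the chain of subcomplexes $\Del_k=\langle F_1,\ldots,F_k\rangle$ and the reverse chain of Stanley--Reisner ideals
$$I_\Del = I_{\Del_m}\subset I_{\Del_{m-1}}\subset\cdots\subset I_{\Del_1}\subset I_{\Del_0}=S.$$
The central task is to show that each quotient $I_{\Del_{k-1}}/I_{\Del_k}$ is isomorphic to $S/P_{F_k}$. The shelling hypothesis produces $\ell<k$ and $v\in F_k$ with $F_k\setminus F_\ell=\{v\}$; tracking how the minimal non-faces change in passing from $\Del_{k-1}$ to $\Del_k$, one identifies the quotient as the cyclic $S$-module generated by the residue class of the monomial $\x^{R_k}$ attached to the restriction set $R_k=\{v\in F_k:F_k\setminus\{v\}\subset F_j\text{ for some }j<k\}$, with annihilator exactly $P_{F_k}$. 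Since the support of the resulting filtration is $\{P_{F_1},\ldots,P_{F_m}\}=\min(I_\Del)$, the filtration is clean.

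For the reverse implication, start with a clean filtration $\F:I_\Del=J_0\subset J_1\subset\cdots\subset J_r=S$ with $J_i/J_{i-1}\cong S/Q_i$ and $\{Q_1,\ldots,Q_r\}=\min(I_\Del)$. The multiplicity count forces $r=m$ and each $P_{F_i}$ to occur exactly once, yielding an ordering $F_{\pi(1)},\ldots,F_{\pi(m)}$ of the facets. The goal is to verify that this ordering is a shelling: given $a<b$, one locates, using an element of $J_b\setminus J_{b-1}$ whose annihilator is $P_{F_{\pi(b)}}$, a vertex $v\in F_{\pi(b)}\setminus F_{\pi(a)}$ and an earlier facet $F_{\pi(\ell)}$ with $F_{\pi(b)}\setminus F_{\pi(\ell)}=\{v\}$.

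The main obstacle I anticipate is the reverse direction: translating the purely algebraic data of the isomorphisms $J_i/J_{i-1}\cong S/P_{F_{\pi(i)}}$ into the combinatorial shelling condition. One must track exactly which squarefree monomials enter the filtration at which stage and match them with restriction sets of the corresponding facets. The forward direction, once the restriction sets $R_k$ are introduced, reduces to essentially bookkeeping about minimal non-faces.
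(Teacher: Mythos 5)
This theorem is not proved in the paper at all: it is quoted as a preliminary and attributed entirely to Dress \cite{Dr}, so there is no internal argument to compare yours against. Judged on its own, your forward direction is the standard argument and is essentially sound: once one knows that the faces of $\langle F_1,\dots,F_k\rangle$ not lying in $\langle F_1,\dots,F_{k-1}\rangle$ form the interval $[R_k,F_k]$ (a reformulation of the exchange-type shelling definition that itself deserves a line of proof), the quotient $I_{\Del_{k-1}}/I_{\Del_k}$ is the cyclic module generated by the class of $\x^{R_k}$ with annihilator $P_{F_k}$, and the support of the resulting filtration is $\min(I_\Del)$, so the filtration is clean.

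The reverse direction, however, is a genuine gap rather than a proof: you state the target and then explicitly defer ``the main obstacle,'' and that obstacle is precisely where the content of Dress's theorem lies. To close it you would need to carry out the following. Writing the clean filtration in the normal form $J_i=J_{i-1}+(u_i)$ with $J_{i-1}:u_i=P_{F_{\pi(i)}}$ (as in Proposition \ref{pretty He-Po}), show that each $u_i$ is squarefree with $R_i:=\supp(u_i)\subseteq F_{\pi(i)}$ (each facet monomial $\x^F$ enters the filtration at exactly one step, and at that step it must be divisible by $u_i$; a localization count shows exactly one facet is attached to each step). This partitions the faces of $\Del$ into intervals $[R_i,F_{\pi(i)}]$, but a partition into such intervals is strictly weaker than shellability, so the crux is to exploit the \emph{order} of the filtration --- i.e.\ the exact equality $J_{i-1}:u_i=P_{F_{\pi(i)}}$ rather than mere containment --- to verify the exchange condition. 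Your sketch also has the orientation backwards: by your own forward construction the first quotient of the filtration is $S/P_{F_m}$, so the shelling order one must extract is $F_{\pi(m)},\dots,F_{\pi(1)}$, whereas the reverse direction as written treats $F_{\pi(1)}$ as the first shelling facet. As it stands, the ``if'' direction is not established.
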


For a simplicial complex $\Del$ and $F\in \Del$, the link of $F$ in
$\Del$ is defined as $$\lk_{\Delta}(F)=\{G\in \Del: G\cap
F=\emptyset, G\cup F\in \Del\},$$ and the deletion of $F$ is the
simplicial complex $$\Del\setminus F=\{G\in \Delta: F \nsubseteq G\}.$$

Woodroofe in \cite{Wo} extended the definition of $k$-decomposability
to non-pure complexes as follows.

Let $\Delta$ be a simplicial complex on vertex set $X$. Then a face $\sigma$ is called a
\emph{shedding face} if every face $\tau$  containing $\sigma$ satisfies the following exchange property: for every
$v \in \sigma$ there is $w\in X \setminus \tau$ such that $(\tau \cup \{w\})\setminus \{v\}$ is a face of $\Delta$.

\begin{defn}\label{k-decom simcomplex}
\cite{Wo} A simplicial complex $\Delta$ is recursively defined to be \emph{$k$-decomposable} if
either $\Delta$ is a simplex or else has a shedding face $\sigma$ with $\dim(\sigma)\leq k$ such that both $\Delta \setminus \sigma$
and $\lk_{\Delta}(\sigma)$ are $k$-decomposable. The complexes $\{\}$ and $\{\emptyset\}$  are considered to be
$k$-decomposable for all $k \geq -1$.
\end{defn}


\begin{defn}
\cite{Ra} Let $I\subset S$ be a monomial ideal. A non unit monomial $u\not\in I$ is called a \emph{cleaner monomial} of $I$ if $\min(\ass(I+Su))\subseteq\min(\ass(I))$.
\end{defn}

\begin{defn}
\cite{Ra} Let $I\subset S$ be a monomial ideal. We say that $I$ is \emph{$k$-clean} whenever $I$ is a prime ideal or $I$ has no embedded prime ideals and there exists a cleaner monomial $u\not\in I$ with $|\supp(u)|\leq k+1$ such that both $I:u$ and $I+Su$ are $k$-clean.
\end{defn}

\begin{thm}\label{k-dec k-cl}
\cite[Theorem 4.1.]{Ra} Let $\Del$ be a ($d-1$)-dimensional simplicial complex. Then $\Del$ is $k$-decomposable if and only if $I_\Del$ is $k$-clean, where $0\leq k\leq d-1$.
\end{thm}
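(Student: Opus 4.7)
The plan is to prove the equivalence by simultaneous induction, matching each ingredient in the recursion of $k$-decomposability with one of the recursion of $k$-cleanness through the standard Stanley--Reisner dictionary. The relevant dictionary items are as follows. Since $I_\Del$ is squarefree, it is radical, so $\ass(I_\Del)=\min(I_\Del)=\{P_{F^c}:F\in\F(\Del)\}$ with $P_{F^c}=(x_i:x_i\notin F)$; in particular $I_\Del$ has no embedded primes, disposing of that clause of the $k$-clean definition automatically at every stage of the recursion. The base cases match: $\Del$ is a simplex iff it has a unique facet iff $I_\Del$ is a monomial prime. For a face $\sigma\in\Del$, writing $x_\sigma=\prod_{x_i\in\sigma}x_i$, the classical identities
$$I_\Del+(x_\sigma)=I_{\Del\setminus\sigma},\qquad I_\Del:x_\sigma=I_{\lk_\Del(\sigma)}$$
show that both recursions stay within the Stanley--Reisner world, and $\dim\sigma\le k$ translates into $|\supp(x_\sigma)|\le k+1$.

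The heart of the proof is the equivalence: $\sigma\in\Del$ is a shedding face iff $x_\sigma$ is a cleaner monomial for $I_\Del$. Since $\ass=\min$ for squarefree ideals, ``cleaner'' means every facet of $\Del\setminus\sigma$ is already a facet of $\Del$. A facet $G$ of $\Del\setminus\sigma$ that is \emph{not} a facet of $\Del$ must have the form $G=F\setminus\{v\}$ with $F\in\F(\Del)$, $\sigma\subseteq F$, and $v\in\sigma$; such an $F\setminus\{v\}$ fails to be a facet of $\Del\setminus\sigma$ exactly when there exists $w\notin F$ with $(F\setminus\{v\})\cup\{w\}\in\Del$, which is precisely Woodroofe's exchange condition for the facet $\tau=F\supseteq\sigma$. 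A short extension-from-facets argument (any $\tau\supseteq\sigma$ lies in some facet containing $\sigma$) shows that this facet-only version is equivalent to the full shedding property, giving the claimed equivalence.

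With the dictionary in hand, the forward direction is immediate: a shedding face $\sigma$ with $\dim\sigma\le k$ produces a cleaner squarefree monomial $x_\sigma$ of support size $\le k+1$, and induction on $\Del\setminus\sigma$ and $\lk_\Del(\sigma)$ yields $k$-cleanness of $I_\Del+(x_\sigma)$ and $I_\Del:x_\sigma$. For the converse, the main obstacle is that the cleaner monomial $u$ supplied by $k$-cleanness of $I_\Del$ is a priori an arbitrary monomial, not necessarily squarefree. I would replace $u$ by $\sqrt u=x_\sigma$ where $\sigma=\supp(u)$, and check: (a) $I_\Del:u=I_\Del:\sqrt u$, since $I_\Del$ is squarefree; (b) $\min\ass(I_\Del+(u))=\min\ass(I_\Del+(\sqrt u))$, because $\sqrt{I_\Del+(u)}=I_\Del+(\sqrt u)$; and (c) $\sqrt u\notin I_\Del$, so $\sigma\in\Del$. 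The delicate step I expect to need the most care is transferring $k$-cleanness from $I_\Del+(u)$ to its radical $I_\Del+(\sqrt u)=I_{\Del\setminus\sigma}$---the one point where one briefly leaves the squarefree comfort zone. Once that is settled, the dictionary returns a shedding face $\sigma$ of $\Del$ with $\dim\sigma\le k$, and the induction hypothesis applied to $\Del\setminus\sigma$ and $\lk_\Del(\sigma)$ completes the proof.
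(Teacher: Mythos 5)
This theorem is not proved in the present paper at all: it is quoted from \cite[Theorem 4.1]{Ra}, so your proposal can only be measured against that cited source, whose argument is by all indications the same Stanley--Reisner dictionary you describe. Your architecture is sound, and the crucial combinatorial lemma --- that for $\emptyset\neq\sigma\in\Del$ the monomial $x_\sigma$ is a cleaner monomial of $I_\Del$ if and only if $\sigma$ is a shedding face, both being equivalent to ``every facet of $\Del\setminus\sigma$ is a facet of $\Del$'' --- is correct, and your sketch of it goes through. (One small completion: to see that the face $F$ with $G=F\setminus\{v\}$ can indeed be taken to be a \emph{facet} of $\Del$, note that a facet $H\supsetneq G\cup\{v\}$ would give $H\setminus\{v\}\in\Del\setminus\sigma$ properly containing $G$, contradicting maximality of $G$ in $\Del\setminus\sigma$.)

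The one genuine gap is exactly the step you flag and then defer. In the ``if'' direction, $k$-cleanness of $I_\Del+(u)$ for a possibly non-squarefree cleaner $u$ must be transferred to its radical $I_\Del+(x_{\supp(u)})=I_{\Del\setminus\supp(u)}$, and this is not a formality: the naive induction on the ideal tree of $J:=I_\Del+(u)$ breaks because $\sqrt{J}:x_{\supp(v)}$ can be strictly larger than $\sqrt{J:v}$ when the next cleaner $v$ is not squarefree, so one must argue separately according to whether $v\in\sqrt{J}$ or not. The statement you need is precisely ``the radical of a $k$-clean monomial ideal is $k$-clean,'' i.e.\ \cite[Theorem 3.2]{Ra}, restated in Section 2 of this paper; you must either invoke it or prove it, otherwise the converse direction is incomplete. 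Two smaller repairs: (i) $I_\Del:x_\sigma$ is the Stanley--Reisner ideal of the join of the full simplex on $\sigma$ with $\lk_\Del(\sigma)$, not of $\lk_\Del(\sigma)$ itself (the variables indexed by $\sigma$ do not lie in the colon ideal), so you need the easy fact that joining with a simplex preserves $k$-decomposability (cf.\ Corollary \ref{join sim. com}); (ii) the induction measure should be stated --- the number of facets of $\Del$ works, since the cleaner/shedding condition forces both $\Del\setminus\sigma$ and $\lk_\Del(\sigma)$ to have strictly fewer facets than $\Del$.
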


The concept of multicomplex was first defined by Stanley \cite{St}. Then Herzog and Popescu \cite{HePo} gave a modification of Stanley's definition which will be used in this paper.

Let $\NN$ be the set of non-negative integers. Define on $\NN^n$ the partial order given by
\begin{center}
$\a\preceq\b$ if $\a(i)\leq\b(i)$ for all $i$.
\end{center}
Set $\NN_\infty=\NN\cup\{\infty\}$. For $\a\in\NN^n_\infty$ we define $\fpt(\a)=\{i:\a(i)\neq\infty\}$ and $\infpt(\a)=\{i:\a(i)=\infty\}$ and $\fpt^\ast(\a)=\{i:0<\a(i)<\infty\}$.

Let $\Gamma$ be a subset of $\NN^n_\infty$. An element $m\in\Gamma$¡ is called \emph{maximal} if there is no $\a\in\Gamma$ with $\a\succ m$. We denote by $\M(\Gamma)$ the set of maximal elements of $\Gamma$. It was shown in \cite[Lemma 9.1]{HePo} that $\M(\Gamma)$ is finite.

\begin{defn}
A subset $\Gamma\subset\NN^n_\infty$ is called a \emph{multicomplex} if
\begin{enumerate}[\upshape (1)]
  \item for all $\a\in\Gamma$ and all $\b\in\NN^n_\infty$ with $\b\preceq\a$ it follows that $\b\in\Gamma$;
  \item for all $\a\in\Gamma$¡ there exists an element $m\in\M(\Gamma)$ such that $\a\preceq m$.
\end{enumerate}
\end{defn}

The elements of a multicomplex are called \emph{faces}. An element $\a\in\Gamma$ is called a \emph{facet} of $\Gamma$ if for all $m\in\M(\Gamma)$ with $\a\preceq m$ one has $\infpt(\a)=\infpt(m)$. Let $\F(\Gamma)$ denote the set of facets of $\Gamma$. The facets in $\M(\Gamma)$ are called \emph{maximal facets}.

It is clear that the set of facets and also the set of maximal facets of a multicomplex $\Gamma$ determine $\Gamma$. The \emph{monomial ideal associated to} $\Gamma$ is the ideal $I(\Gamma)$ generated by all monomials $x^\a$ such that $\a\not\in\Gamma$. Also, if $I\subset S$ is any monomial ideal then the \emph{multicomplex associated to} $I$ is defined to be $\Gamma(I)=\{\a\in\NN^n_\infty:x^\a\not\in I\}$. Note that $I(\Gamma(I))=I$ and, moreover, $\Gamma(I)$ is unique with this property. For $A=\{\a_1,\ldots,\a_r\}\subset\NN^n_\infty$, we denote by $\langle \a_1,\ldots,\a_r\rangle$ the unique smallest multicomplex containing $A$.

For $\a\in\Gamma$, define $\dim(\a)=|\infpt(\a)|-1$ and
$$\dim(\Gamma)=\max\{\dim(\a):\a\in\Gamma\}.$$

We call $S\subset\NN^n_\infty$ a \emph{Stanley set of degree} $\a$ if there exist $\a\in\NN^n$ and $m\in\NN^n_\infty$ with
$m(i )\in\{0,\infty\}$ such that $S=\a+S^\ast$, where $S^\ast=\langle m\rangle$. The dimension of $S$ is defined to be $\dim(\langle m\rangle)$.

\begin{defn}
\cite{HePo} A multicomplex $\Gamma$ is \emph{shellable} if the facets of $\Gamma$ can be ordered $\a_1,\ldots,\a_r$ such that
\begin{enumerate}[\upshape (1)]
  \item $S_i=\langle\a_i\rangle\backslash\langle\a_1,\ldots,\a_{i-1}\rangle$ is a Stanley set for $i=1,\ldots,r$;
  \item If $S^\ast_i\subset S^\ast_j$ then $S^\ast_i=S^\ast_j$ or $i>j$.
\end{enumerate}
An ordering of the facets satisfying (1) and (2) is called a \emph{shelling} of $\Gamma$.
\end{defn}

\begin{thm}\label{Sim-multi shellable}
\cite[Proposition 10.3.]{HePo} Let $\Del$ be a simplicial complex with facets $F_1,\ldots,F_r$, and $\Gamma$ be the multicomplex with facets $\a_{F_1},\ldots,\a_{F_r}$. Then $\Del$ is shellable if and only if $\Gamma$ is shellable.
\end{thm}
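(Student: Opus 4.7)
The plan is to exploit the natural encoding of a simplicial face $F \subseteq [n]$ as the tuple $\a_F \in \NN^n_\infty$ with $\a_F(k) = \infty$ for $k \in F$ and $\a_F(k) = 0$ otherwise. A direct check gives $\langle \a_F \rangle = \{\b \in \NN^n_\infty : \supp(\b) \subseteq F\}$ and hence $\langle \a_F \rangle \cap \langle \a_{F'} \rangle = \langle \a_{F \cap F'} \rangle$. Writing $S_j := \langle \a_{F_j}\rangle \setminus \langle \a_{F_1},\ldots,\a_{F_{j-1}}\rangle$, this yields
$$
S_j = \bigl\{\b \in \NN^n_\infty : \supp(\b) \subseteq F_j \text{ and } \supp(\b) \not\subseteq F_i \text{ for all } i < j\bigr\}.
$$
Since the facets of $\Del$ are pairwise incomparable, $S^\ast_i = \langle \a_{F_i}\rangle \subsetneq \langle \a_{F_j}\rangle = S^\ast_j$ never occurs, so condition (2) of multicomplex shellability is automatic, and the equivalence reduces to matching condition (1) with the simplicial shelling criterion.

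For the forward direction, I would assume $F_1,\ldots,F_r$ is a shelling of $\Del$ and set $R_j := \{v \in F_j : F_j \setminus F_\ell = \{v\} \text{ for some } \ell < j\}$. The goal is to prove $S_j = \{\b : R_j \subseteq \supp(\b) \subseteq F_j\}$, which realises $S_j$ as the Stanley set $\1_{R_j} + \langle \a_{F_j} \rangle$, where $\1_{R_j}$ denotes the $0/1$-vector supported on $R_j$. The inclusion $\subseteq$ is immediate: for each $v \in R_j$ with associated $\ell$, the requirement $\supp(\b) \not\subseteq F_\ell$ forces $\b(v) \neq 0$. For $\supseteq$, given $i < j$ the shelling property provides $v \in R_j \cap (F_j \setminus F_i)$, and $R_j \subseteq \supp(\b)$ then gives $\supp(\b) \not\subseteq F_i$.

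For the reverse direction, I would assume $\Gamma$ is shellable with the given ordering, so each $S_j = \a^{(j)} + \langle m^{(j)} \rangle$ with $m^{(j)}(v) \in \{0,\infty\}$. Since $\a_{F_j} \in S_j$ (by facet incomparability), coordinate matching forces $m^{(j)} = \a_{F_j}$; matching against $\1_{F_j} \in S_j$ then forces $\a^{(j)}$ to be a $0/1$-vector $\1_{T_j}$ for some $T_j \subseteq F_j$, yielding $S_j = \{\b : T_j \subseteq \supp(\b) \subseteq F_j\}$. Two claims then complete the simplicial shelling: (a) $T_j \cap (F_j \setminus F_i) \neq \emptyset$ for every $i < j$, obtained by feeding $\1_{T_j} \in S_j$ into the description of $S_j$ from the first paragraph; and (b) for every $v \in T_j$ there is $\ell < j$ with $F_j \setminus F_\ell = \{v\}$, obtained by observing that $\1_{F_j \setminus \{v\}} \in \langle \a_{F_j}\rangle \setminus S_j$, so $F_j \setminus \{v\} \subseteq F_\ell$ for some $\ell < j$, which combined with facet incomparability forces $F_j \setminus F_\ell = \{v\}$.

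The main obstacle is this reverse direction: one must extract the combinatorial witnesses $(v,\ell)$ demanded by simplicial shellability purely from the abstract Stanley decomposition. The key device is to probe $S_j$ with the three specific squarefree test vectors $\a_{F_j}$, $\1_{F_j}$, and $\1_{F_j \setminus \{v\}}$ to pin down the supports forced by the Stanley parameters (showing $T_j$ coincides with $R_j$) and to produce the required neighbouring facet $F_\ell$.
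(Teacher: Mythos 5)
The paper does not actually prove this statement—it is quoted as Proposition 10.3 of Herzog--Popescu—so there is no internal proof to compare against. Your argument is correct and self-contained, and it follows the natural (essentially standard) route: identifying each $S_j$ with the interval $\{\b: R_j\subseteq\supp(\b)\subseteq F_j\}=\1_{R_j}+\langle\a_{F_j}\rangle$ determined by the shelling restriction $R_j$, and conversely recovering the witnesses $(v,\ell)$ by probing the Stanley decomposition with the squarefree vectors $\a_{F_j}$, $\1_{F_j}$, and $\1_{F_j\setminus\{v\}}$.
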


\begin{thm}\label{HePo shell}
\cite[Theorem 10.5]{HePo} The multicomplex $\Gamma$ is shellable if and only if $I(\Gamma)$ is a pretty clean monomial ideal.
\end{thm}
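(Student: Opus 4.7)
The strategy is to translate between shellings of $\Gamma$ and pretty clean filtrations of $I(\Gamma)$ via Stanley decompositions of $S/I(\Gamma)$. Given a shelling $\a_1,\ldots,\a_r$ of $\Gamma$ with Stanley sets $S_i=\langle\a_i\rangle\setminus\langle\a_1,\ldots,\a_{i-1}\rangle$, one can write $S_i=\a_i'+\langle m_i\rangle$ with $\a_i'\in\NN^n$ and $m_i\in\NN^n_\infty$ having entries in $\{0,\infty\}$. The set-theoretic partition $\Gamma=\bigsqcup_{i=1}^r S_i$ yields a $K$-basis decomposition
$$S/I(\Gamma)=\bigoplus_{i=1}^r x^{\a_i'}\,K[x_l:l\in\infpt(m_i)],$$
which serves as the blueprint for a prime filtration whose successive quotients have primes $P_i=(x_l:l\notin\infpt(m_i))$.

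For the direction $(\Rightarrow)$, I would build the filtration by \emph{reversing} the shelling order: let $M_j\subseteq S/I(\Gamma)$ be the $S$-submodule generated by the classes of $x^{\a_{r-j+1}'},\ldots,x^{\a_r'}$. The decomposition above forces $M_j/M_{j-1}\cong S/P_{r-j+1}$, since the class of $x^{\a_{r-j+1}'}$ is annihilated modulo $M_{j-1}$ by precisely the variables outside $\infpt(m_{r-j+1})$. Lifting back to ideals yields a prime filtration of $I(\Gamma)$ with primes $P_r,P_{r-1},\ldots,P_1$. To verify pretty cleanness, note that $P_s\subseteq P_t$ iff $\infpt(m_s)\supseteq\infpt(m_t)$ iff $S_t^\ast\subseteq S_s^\ast$, while $P_s$ precedes $P_t$ in the filtration iff $s>t$ in the shelling. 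The pretty clean condition therefore reads: $S_t^\ast\subseteq S_s^\ast$ with $t<s$ forces $S_t^\ast=S_s^\ast$, which is exactly shelling condition~(2). Conversely, a pretty clean filtration $I(\Gamma)=I_0\subset\cdots\subset I_r=S$ with $I_j/I_{j-1}\cong S/P_j$ determines, via the standard correspondence between prime filtrations of monomial ideals and Stanley decompositions, a family of Stanley pieces $T_j=\u_j+\langle m_j\rangle\subseteq\Gamma$ with $\infpt(m_j)=[n]\setminus\supp(P_j)$. Using $\Supp(\F)=\ass(I(\Gamma))$ (valid for pretty clean $\F$), each $T_j$ picks out a facet $\a_j$ of $\Gamma$, and reversing the filtration order produces the candidate shelling, whose condition~(2) follows from the translation above.

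The main obstacle is verifying condition~(1) of shellability in the $(\Leftarrow)$ direction: the Stanley pieces $T_j$ must aggregate into a \emph{single} set $S_i=\langle\a_i\rangle\setminus\langle\a_1,\ldots,\a_{i-1}\rangle$ per facet. A single facet $\a$ of $\Gamma$ may be responsible for several filtration steps with the same associated prime, and the pretty clean condition---together with the freedom to reorder equal primes---must be used to absorb these into one Stanley set of the required form. The other delicate point, in both directions, is the passage between the combinatorial partition of $\Gamma$ into Stanley sets and the algebraic $\ZZ^n$-graded direct sum: ensuring that every piece of $\Gamma$ is actually accounted for, with no spurious pieces introduced, relies on $\Supp(\F)=\ass(I(\Gamma))$ from \cite{HePo} to rule out extra associated primes.
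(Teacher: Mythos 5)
First, a point of orientation: the paper does not prove this statement at all --- it is quoted as a preliminary directly from Herzog--Popescu \cite[Theorem 10.5]{HePo} --- so there is no internal proof to measure yours against; I can only judge your argument on its own terms. Your forward direction is sound and is essentially the standard argument: the partition $\Gamma=\bigsqcup_i S_i$ into Stanley sets corresponds to the chain of monomial ideals $I(\langle\a_1,\ldots,\a_{r-j}\rangle)$, each successive quotient is the cyclic module generated by the class of $x^{\a'_{r-j+1}}$ with annihilator $(x_l: l\notin\infpt(m_{r-j+1}))$, and your index bookkeeping correctly identifies shelling condition (2) with the pretty clean condition after the reversal of order.

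The converse, however, contains a genuine gap, and it is precisely the hard half of the theorem. From a pretty clean filtration you obtain Stanley pieces $T_j=\u_j+\langle m_j\rangle$ partitioning $\Gamma$, and you then assert that (i) each $T_j$ ``picks out a facet $\a_j$'' and (ii) the pieces can be ``absorbed'' into sets of the form $\langle\a_i\rangle\setminus\langle\a_1,\ldots,\a_{i-1}\rangle$. Neither is established. For (i), the natural candidate is $\a_j$ with $\a_j(l)=\infty$ for $l\in\infpt(m_j)$ and $\a_j(l)=\u_j(l)$ otherwise; that this element is actually a facet --- i.e.\ that $\infpt(\a_j)=\infpt(m)$ for every maximal face $m\succeq\a_j$ --- is not automatic: one needs the description of $\ass(I(\Gamma))$ in terms of the sets $\infpt(\a)$, $\a\in\F(\Gamma)$, combined with $\Supp(\F)=\ass(I(\Gamma))$, and even then one must exclude that $\a_j$ is properly dominated by a face with strictly larger $\infpt$. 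For (ii), you must show that $T_j\mapsto\a_j$ is a bijection onto $\F(\Gamma)$ and that, after reordering, each single piece $T_j$ coincides exactly with $\langle\a_j\rangle\setminus\langle\a_1,\ldots,\a_{j-1}\rangle$ --- both inclusions require proof, and this is exactly where pretty cleanness (as opposed to the mere existence of a multigraded prime filtration) does real work; ``the freedom to reorder equal primes'' is named but never actually used. As written, the converse is a plan rather than a proof: completing it amounts to reproducing the supporting results of Section 10 of \cite{HePo}.
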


Let $I\subseteq S$ be a monomial ideal generated by the set
$G(I)=\{u_1,\ldots,u_r\}$. Let for each $i$,
$u_i=\prod_{j=1}^{n}x^{t_{ij}}_j$ and for each $j$,
$t_j=\max\{t_{ij}:i=1,\ldots,r\}$. Let
$$T=K[x_{1,1},x_{1,2},\ldots,x_{1,t_1},x_{2,1},x_{2,2},\ldots,x_{2,t_2},\ldots,x_{n,1},x_{n,2},\ldots,x_{n,t_n}]$$
be a polynomial ring over $K$. For each $i=1,\ldots,r$ set
$$v_i:=\prod_{j=1}^{n}\prod_{k=1}^{t_{ij}}x_{jk}.$$
The monomial $v_i$ is squarefree and is called the
\emph{polarization} of $u_i$. Also, we denote the polarization of $I$
by $I^p$ and it is a squarefree monomial ideal generated by
$\{v_1,\ldots,v_r\}$.

\begin{thm}\label{polar of clean}
\cite[Theorem 3.10.]{So} The monomial ideal $I$ is pretty clean if and only if $I^p$ is clean.
\end{thm}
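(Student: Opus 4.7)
The plan is to transfer prime filtrations between $S/I$ and $T/I^p$ via the ring projection $\pi: T \to S$ defined by $x_{j,k} \mapsto x_j$, whose kernel is generated by the sequence $\{x_{j,k}-x_{j,1}: 2 \leq k \leq t_j\}$, which is regular on $T/I^p$ by the standard property of polarization. A key preliminary is that, since $I^p$ is a squarefree monomial ideal, $\ass(I^p)=\min(I^p)$, and $\pi$ induces a surjection $\min(I^p) \twoheadrightarrow \ass(I)$ sending a minimal prime $(x_{j_1,k_1},\ldots,x_{j_s,k_s})$ of $I^p$ to the associated prime $(x_{j_1},\ldots,x_{j_s})$ of $I$. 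This bridge at the level of primes is what makes the equivalence run.

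For the implication ``$I^p$ clean $\Rightarrow I$ pretty clean,'' I would start from a clean filtration $\F^p: I^p = J_0 \subset \cdots \subset J_s = T$ with $J_i/J_{i-1} \cong T/P_i^p$ and each $P_i^p \in \min(I^p)$. Killing the regular sequence $\{x_{j,k}-x_{j,1}\}$ on $\F^p$ yields a prime filtration of $I$ whose supporting primes are the projections $\pi(P_i^p) \in \ass(I)$. The nontrivial point is the pretty-clean order condition: if $\pi(P_i^p) \subsetneq \pi(P_j^p)$ with $i < j$, then since the $P_i^p$ are pairwise incomparable minimal primes this strict containment arises only from the collapse of different ``copies'' $x_{j,k}$ to $x_j$. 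I would establish an exchange lemma stating that two consecutive steps of a clean filtration indexed by incomparable minimal primes can be transposed while preserving cleanness, then use iterated swaps to reorder $\F^p$ so its projection is pretty clean.

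For the converse ``$I$ pretty clean $\Rightarrow I^p$ clean,'' I would lift a pretty clean filtration $\F: I = I_0 \subset \cdots \subset I_r = S$, with $I_j = I_{j-1} + (u_j)$ and $P_j = I_{j-1}:u_j$, step by step to $T$. At step $j$, I would choose a polarized monomial $u_j^* \in T$ supported on the variables $x_{j,k}$ in a way compatible with the earlier lifts, so that the colon in $T$ of the partial lift by $u_j^*$ is a minimal prime $P_j^* \in \min(I^p)$ with $\pi(P_j^*)=P_j$. Because every supporting prime of the resulting filtration of $I^p$ is minimal, the filtration is automatically clean, and the pretty-clean order on the $P_j$ passes through to a legitimate ordering of the $P_j^*$ with no further adjustment needed.

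The main obstacle is the reorderability step in the first direction: precisely formulating and proving the exchange lemma that lets one swap consecutive incomparable primes in a clean filtration without breaking cleanness, and showing that such swaps suffice to realize a pretty-clean compatible order after projection. A secondary technical hurdle is the coherent choice of the lifts $u_j^*$ in the second direction, which must track which ``copies'' of each variable have already been ``used up'' in the preceding steps so that the successive colons really land in $\min(I^p)$ and project correctly onto the prescribed $P_j \in \ass(I)$.
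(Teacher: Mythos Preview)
The paper does not supply a proof of this statement; it appears in the preliminaries section as a quoted result, attributed to \cite[Theorem 3.10]{So}. So there is no ``paper's own proof'' to compare against for this particular theorem.

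That said, the paper does prove a refinement, Corollary~\ref{polar of k-clean} ($I$ is pretty $k$-clean iff $I^p$ is $k$-clean), and its argument there is structurally quite different from your plan. Rather than manipulating prime filtrations directly, the paper routes everything through the combinatorics of multicomplexes: $I$ is pretty $k$-clean iff $\Gamma(I)$ is $k$-decomposable (Theorem~\ref{k-decom pretty}), iff the polarized multicomplex $\Gamma(I)^p$ is $k$-decomposable (Theorem~\ref{polar k-decom}), iff the associated simplicial complex is $k$-decomposable (Proposition~\ref{Sim-multi k-decom}), iff $I^p$ is $k$-clean (Theorem~\ref{k-dec k-cl}). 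Specializing via Theorems~\ref{Equ. pretty k-clean} and~\ref{shell k-decom} recovers the pretty clean/clean case. The key technical device is Soleyman Jahan's bijection $\beta:\F(\Gamma)\to\F(\Gamma^p)$ between facets, together with a recursive check that shedding faces correspond under polarization; no filtration reordering or lifting of generators is needed.

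Your filtration-transfer approach is closer in spirit to a direct algebraic argument and avoids building the multicomplex machinery. The obstacles you identify are genuine: the exchange/reordering step in the first direction and the coherent choice of lifts $u_j^*$ in the second are exactly where the work lies, and your sketch does not yet resolve them. The combinatorial route sidesteps both by trading filtration bookkeeping for a structural bijection on facets, at the cost of first establishing the dictionary between pretty $k$-cleanness and $k$-decomposability.
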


\section{Pretty $k$-clean monomial ideals}

Let $I\subset S$ be a monomial ideal. A prime filtration
$$\F:(0)=M_0\subset M_1\subset \ldots\subset M_{r-1}\subset M_r=S/I$$
of $S/I$ is called \emph{multigraded}, if all $M_i$ are multigraded submodules of $M$, and if there are multigraded isomorphisms $M_i/M_{i-1}\cong S/P_i(-\a_i)$ with some $\a_i\in\ZZ^n$ and some multigraded prime ideals $P_i$.

\begin{defn}\label{Equ. pretty k-clean}
Let $I\subset S$ be a monomial ideal. A non unit monomial $u\not\in I$ is called \emph{pretty cleaner} if for $P\in \ass(I:u)$ and $Q\in \ass(I+Su)$ which $P\subseteq Q$ it follows that $P=Q$.
\end{defn}

\begin{defn}\label{Equ. pretty k-clean}
A monomial ideal $I\subset S$ is called \emph{pretty $k$-clean} if it is a prime ideal or there exists a pretty cleaner monomial $u\not\in I$ with $|\supp(u)|\leq k+1$ such that both $I:u$ and $I+Su$ are pretty $k$-clean.
\end{defn}

Note that pretty $k$-cleanness implies pretty $k'$-cleanness for $0\leq k\leq k'$. But the converse implication is not true in general. To see an example of pretty $k$-clean ideals which are not pretty $0$-clean, refer to Remark \ref{prt not k-prt}.

\begin{rem}
It is clear that every $k$-clean monomial ideal is pretty $k$-clean. But a cleaner monomial need not be pretty cleaner. To see this, consider the monomial ideal $$I=(x_1x^2_2,x_2x^2_3, x^2_1x_3)\subset S'=K[x_1,x_2,x_3].$$
Then
$$\begin{array}{l}
    \ass(I)=\{(x_1,x_2),(x_1,x_3),(x_2,x_3),(x_1,x_2,x_3)\}, \\
    \ass(I+S'x^2_1)=\{(x_1,x_2),(x_1,x_3),(x_1,x_2,x_3)\},\\
    \ass(I:x^2_1)=\{(x_2,x_3)\}.
  \end{array}
$$
Notice that $x^2_1$ is cleaner but not pretty cleaner.
\end{rem}

It follows from the definition that the construction of a pretty $k$-clean monomial ideal is similar to that of a $k$-clean monomial ideal (c.f. \cite{Ra}). In other words, for a pretty $k$-clean monomial ideal $I\subset S$ there is a rooted, finite, directed and binary tree $\mathcal{T}$ whose root is $I$ and every node $\n$ is labeled by a pretty $k$-clean monomial ideal $I_\n$ containing $I$. Also, every nonterminal node $\n$ is labeled by a monomial $u_\n$ which is a pretty cleaner monomial of $I_\n$. $\T$ is depicted in the following:
$$\begin{tikzpicture}
\coordinate (a) at (0,4);
\coordinate (b) at (-3,3);
\coordinate (b') at (-3,2.5);
\coordinate (c) at (3,3);
\coordinate (c') at (3,2.5);
\coordinate (d) at (-5,2);
\coordinate (e) at (-1,2);
\coordinate (f) at (1,2);
\coordinate (g) at (5,2);
\coordinate (d') at (-5,1.5);
\coordinate (e') at (-1,1.5);
\coordinate (f') at (1,1.5);
\coordinate (g') at (5,1.5);
\coordinate (h) at (-5.75,1);
\coordinate (i) at (-4.25,1);
\coordinate (j) at (-1.75,1);
\coordinate (k) at (-0.25,1);
\coordinate (l) at (0.25,1);
\coordinate (m) at (1.75,1);
\coordinate (n) at (4.25,1);
\coordinate (o) at (5.75,1);

\node[above] at (0,4)  {$I_{\n_1}:=I$};
\node at (0,3.5)  {$u_{\n_1}$};
\node[below] at (-3,3)  {$I_{\n_2}:=I:u_{\n_1}$};
\node[below] at (3,3)  {$I_{\n_3}:=I+Su_{\n_1}$};
\node[below] at (-5,2)  {$I_{\n_4}:=I_{\n_2}:u_{\n_2}$};
\node[below] at (-1.5,2)  {$I_{\n_5}:=I_{\n_2}+Su_{\n_2}$};
\node[below] at (1.5,2)  {$I_{\n_6}:=I_{\n_3}:u_{\n_3}$};
\node[below] at (5,2)  {$I_{\n_7}:=I_{\n_3}+Su_{\n_3}$};
\node at (-3,2.15)  {$u_{\n_2}$};
\node at (3,2.15)  {$u_{\n_3}$};

\node at (-5,1.15)  {$u_{\n_4}$};
\node at (-1,1.15)  {$u_{\n_5}$};
\node at (1,1.15)  {$u_{\n_6}$};
\node at (5,1.15)  {$u_{\n_7}$};

\node[below] at (-5.75,1) {$\vdots$};
\node[below] at (-4.25,1) {$\vdots$};
\node[below] at (-1.75,1) {$\vdots$};
\node[below] at (-0.25,1) {$\vdots$};
\node[below] at (0.25,1) {$\vdots$};
\node[below] at (1.75,1) {$\vdots$};
\node[below] at (4.25,1) {$\vdots$};
\node[below] at (5.75,1) {$\vdots$};

\draw [->] (a) -- (c);
\draw [->] (a) -- (b);
\draw [->] (b') -- (d);
\draw [->] (b') -- (e);
\draw [->] (c') -- (f);
\draw [->] (c') -- (g);
\draw [->] (d') -- (h);
\draw [->] (d') -- (i);
\draw [->] (e') -- (j);
\draw [->] (e') -- (k);
\draw [->] (f') -- (l);
\draw [->] (f') -- (m);
\draw [->] (g') -- (n);
\draw [->] (g') -- (o);
\end{tikzpicture}$$
$\T$ is called the \emph{ideal tree} of $I$ and the number of all pretty cleaner monomials $u_{\n_1},u_{\n_2},\ldots$ appeared in $\T$ is called the \emph{length of} $\T$. We denote the length of $\T$ by $l(\T)$.

We define the \emph{pretty $k$-cleanness length} of the pretty $k$-clean monomial ideal $I$ by $$l(I)=\min\{l(\T):\T\ \mbox{is an ideal tree of}\ I\}.$$

The following proposition gives an useful description of the structure of pretty clean filtrations.

\begin{prop}\label{pretty He-Po}
\cite[Proposition 10.1.]{HePo} Let $S=K[x_1,\ldots,x_n]$ be the polynomial ring, and $I\subset S$ a
monomial ideal. The following conditions are equivalent:
\begin{enumerate}[\upshape (a)]
  \item $S/I$ admits a multigraded prime filtration $\F:(0)=M_0\subset M_1\subset\ldots\subset M_{r-1}\subset M_r=S/I$ such that $M_i/M_{i-1}\cong S/P_i(-\a_i)$ for all $i$;
  \item there exists a chain of monomial ideals $I=I_0\subset I_1\subset\ldots\subset I_r=S$ and monomials $u_i$ of multidegree $\a_i$ such that $I_i=I_{i-1}+Su_i$ and $I_{i-1}:u_i=P_i$;
\end{enumerate}
\end{prop}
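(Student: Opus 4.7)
The plan is to set up a direct bijective correspondence between the two data sets, exploiting the fact that multigraded submodules of $S/I$ are precisely the images $J/I$ of monomial ideals $J\supseteq I$, and that a cyclic multigraded $S$-module is generated by a single multihomogeneous element.

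For the direction (a)$\Rightarrow$(b), I would start from a multigraded prime filtration $\F$ and, for each $i$, pick the monomial ideal $I_i\supseteq I$ characterized by $M_i=I_i/I$. This identification is legitimate because the $M_i$ are multigraded submodules of $S/I$, and multigraded submodules of $S/I$ are exactly in bijection with monomial ideals containing $I$. The multigraded isomorphism $M_i/M_{i-1}\cong S/P_i(-\a_i)$ forces $M_i/M_{i-1}$ to be cyclic with a multihomogeneous generator of multidegree $\a_i$; this generator lifts to a monomial $u_i$ of multidegree $\a_i$ in $I_i\setminus I_{i-1}$. Two observations then finish the direction: firstly, $M_i=M_{i-1}+Su_i$ translates into $I_i=I_{i-1}+Su_i$; secondly, the annihilator of the generator $u_i+I_{i-1}$ equals the colon ideal $I_{i-1}:u_i$, and the isomorphism identifies this annihilator with $P_i$.

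For the converse (b)$\Rightarrow$(a), I would simply set $M_i:=I_i/I$. Then $M_0=0$, $M_r=S/I$, and each $M_i$ is multigraded. From $I_i=I_{i-1}+Su_i$ one obtains that $M_i/M_{i-1}\cong I_i/I_{i-1}$ is cyclic, generated by the class of the monomial $u_i$, so the natural surjection $S\to I_i/I_{i-1}$ sending $1\mapsto u_i+I_{i-1}$ has kernel exactly $I_{i-1}:u_i=P_i$. Because $u_i$ is multihomogeneous of degree $\a_i$, this surjection becomes a multigraded isomorphism $S/P_i(-\a_i)\cong M_i/M_{i-1}$, exhibiting the desired multigraded prime filtration. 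The hypothesis that $P_i$ is prime is of course built into (b) via the phrase ``$I_{i-1}:u_i=P_i$''.

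The only substantive point, which I would treat as a short preliminary lemma rather than as a routine remark, is the equivalence between multigraded submodules of $S/I$ and monomial ideals $J\supseteq I$, together with the fact that a cyclic multigraded $S$-module is generated by a single multihomogeneous element. Once this is in place, the two directions are essentially bookkeeping: each step of the filtration encodes and is encoded by the triple $(I_i, u_i, P_i)$, and the passage between ``generator with given annihilator'' and ``colon ideal of a monomial'' is exactly the content of the equivalence. I do not anticipate a real obstacle; the main care is to keep track of multidegrees so that the shift $(-\a_i)$ corresponds precisely to the multidegree of $u_i$.
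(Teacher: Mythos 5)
Your argument is correct and is essentially the standard proof of this correspondence; the paper itself gives no proof but simply cites \cite[Proposition 10.1]{HePo}, whose argument proceeds exactly as you describe (identifying multigraded submodules of $S/I$ with monomial ideals $J\supseteq I$ and reading off the cyclic quotients via annihilators of monomial generators). The one point you flag as a preliminary lemma — that a nonzero multihomogeneous element of $I_i/I_{i-1}$ of degree $\a_i$ is a scalar multiple of $x^{\a_i}+I_{i-1}$, since each multigraded component of $S$ is one-dimensional — is indeed the only substantive ingredient, and you have handled it correctly.
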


As an immediate consequence of the previous proposition we get

\begin{cor}\label{pretty irr}
Let $S=K[x_1,\ldots,x_n]$ be the polynomial ring, and $I\subset S$ a
monomial ideal. Let $S/I$ be pretty clean with the multigraded prime filtration $\F:I=I_0\subset I_1\subset\ldots\subset I_r=S$ such that $I_i/I_{i-1}\cong S/P_i(-\a_i)$ for all $i$. Set $I_i=\bigcap^r_{j=i+1}J_j$ for $i=0,\ldots,r$. Then $\ass(I_i)=\{P_{i+1},\ldots,P_r\}$ for all $i=0,\ldots,r$.
\end{cor}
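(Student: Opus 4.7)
The plan is to exploit the fact that a pretty clean filtration restricts to a pretty clean filtration on any ``tail'' of itself, and then to invoke the Herzog--Popescu identification of the support with the set of associated primes recalled in Section~1.

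Fix $i$ and look at the chain
\[
I_i \subset I_{i+1} \subset \ldots \subset I_r = S.
\]
Since every consecutive quotient $I_j/I_{j-1}\cong S/P_j(-\a_j)$ is inherited verbatim from $\F$, this is a multigraded prime filtration of $S/I_i$ with support $\{P_{i+1},\ldots,P_r\}$. The pretty clean condition on $\F$ reads: for every $k<l$ in $\{1,\ldots,r\}$ with $P_k\subseteq P_l$ one has $P_k=P_l$. Restricting the index set to $\{i+1,\ldots,r\}$ gives exactly the same condition, so the tail filtration of $S/I_i$ is still pretty clean.

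Now I apply the statement recalled in Section~1 from \cite{HePo}: if $\F$ is a pretty clean filtration of a monomial ideal $I$, then $\Supp(\F)=\ass(I)$. Applied to the tail filtration above, whose support is $\{P_{i+1},\ldots,P_r\}$, this gives immediately
\[
\ass(I_i)=\{P_{i+1},\ldots,P_r\},
\]
as desired. The boundary cases $i=r$ (empty support, $S/I_r=0$) and $i=0$ (original filtration of $I$) are covered uniformly.

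Concerning the ancillary formula $I_i=\bigcap_{j=i+1}^{r}J_j$: each step $I_{j-1}\subset I_j=I_{j-1}+Su_j$ with $I_{j-1}:u_j=P_j$ coming from Proposition~\ref{pretty He-Po}(b) produces an irreducible $P_j$-primary monomial ideal $J_j$ for which $I_{j-1}=I_j\cap J_j$; iterating this downward from $I_r=S$ gives $I_i=\bigcap_{j>i}J_j$. This is a routine unwinding of the multigraded data attached to $\F$ and is not the crux of the corollary. The whole argument has no real obstacle; the essential observation is simply that pretty cleanness is inherited by tails of filtrations, which is free from the definition.
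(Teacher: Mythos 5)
Your argument is correct and is exactly the intended one: the paper offers no explicit proof, presenting the corollary as an immediate consequence of Proposition~\ref{pretty He-Po} together with the fact (recalled in Section~1 from \cite{HePo}) that the support of a pretty clean filtration equals the set of associated primes, which is precisely what your tail-filtration observation delivers. Your remark that the identity $I_i=\bigcap_{j>i}J_j$ is ancillary is also consistent with the paper, which states it as part of the setup rather than as something to be proved.
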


Now we want to prove the main result of this section.

\begin{thm}\label{Equ. pretty k-clean}
Every pretty $k$-clean monomial ideal is pretty clean. Also, a pretty clean monomial ideal is pretty $k$-clean, for some $k\geq 0$.
\end{thm}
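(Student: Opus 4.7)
The plan is to prove the two implications separately: the forward direction by induction on the length $l(I)$ of a minimal ideal tree, and the reverse by induction on the length of a pretty clean filtration.

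For the forward direction, assume $I$ is pretty $k$-clean and induct on $l(I)$. If $l(I)=0$ then $I$ is prime and $(0)\subset S/I$ is trivially a pretty clean filtration. Otherwise, there is a pretty cleaner monomial $u\not\in I$ with $|\supp(u)|\leq k+1$ such that both $I:u$ and $I+Su$ are pretty $k$-clean with strictly smaller ideal trees, so by induction they admit pretty clean filtrations $\F_1$ and $\F_2$. Multiplication by $u$ gives a short exact sequence
$$0 \longrightarrow S/(I:u) \longrightarrow S/I \longrightarrow S/(I+Su) \longrightarrow 0,$$
under which $\F_1$ pulls back to a filtration of the submodule $(I+Su)/I\subset S/I$ and $\F_2$ extends this up to $S/I$. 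The concatenated filtration has supports, in order, the associated primes of $I:u$ followed by those of $I+Su$ (using the result of Herzog--Popescu recorded in the preliminaries that a pretty clean filtration has support equal to $\ass$). Pretty cleanness within either half is inductive; across the seam, if $P\in\ass(I:u)$ appears before $Q\in\ass(I+Su)$ with $P\subseteq Q$, then $P=Q$ by the pretty cleaner hypothesis on $u$. Hence $S/I$ is pretty clean.

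For the reverse direction, assume $I$ is pretty clean and apply Proposition \ref{pretty He-Po} to obtain a chain of monomial ideals $I=I_0\subset I_1\subset\cdots\subset I_r=S$ with monomials $u_1,\ldots,u_r$ satisfying $I_i=I_{i-1}+Su_i$ and $I_{i-1}:u_i=P_i$ prime, and with the property that $P_i\subseteq P_j$ forces $P_i=P_j$ whenever $i<j$. I induct on $r$. I claim $u_1$ is a pretty cleaner monomial of $I$: indeed $\ass(I:u_1)=\{P_1\}$, while Corollary \ref{pretty irr} yields $\ass(I+Su_1)=\ass(I_1)=\{P_2,\ldots,P_r\}$, and any $P_j\supseteq P_1$ with $j\geq 2$ forces $P_1=P_j$ by the pretty clean hypothesis. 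Now $I:u_1=P_1$ is prime, hence pretty $0$-clean, and $I+Su_1=I_1$ inherits a pretty clean filtration of length $r-1$, so by induction is pretty $k_1$-clean for some $k_1\geq 0$. Setting $k=\max(k_1,\,|\supp(u_1)|-1)$ and using the already-noted monotonicity that pretty $k$-cleanness implies pretty $k'$-cleanness for $k'\geq k$, one concludes that $I$ is pretty $k$-clean.

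The main technical obstacle is the compatibility check in the forward direction: one must verify that gluing $\F_1$ and $\F_2$ along the exact sequence does not create a new pair of nested primes across the seam, and this is exactly what the pretty cleaner hypothesis is designed to rule out. A secondary point is the identification of the associated primes on either side of the split in the reverse direction, which is delivered cleanly by Corollary \ref{pretty irr}.
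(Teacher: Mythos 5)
Your proposal is correct and follows essentially the same route as the paper: the forward direction glues the two pretty clean filtrations along the multiplication-by-$u$ isomorphism $S/(I:u)(-\mathbf a)\cong (I+Su)/I$ and checks the seam with the pretty cleaner hypothesis, and the reverse direction extracts $u_1$ from the chain of Proposition \ref{pretty He-Po} and identifies $\ass(I_1)$ via Corollary \ref{pretty irr}. Your explicit handling of the base case and of taking $k=\max(k_1,|\supp(u_1)|-1)$ is slightly more careful than the paper's, but the argument is the same.
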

\begin{proof}
Suppose that $I$ is a pretty $k$-clean monomial ideal. We use induction on the pretty $k$-cleanness length of $I$. Assume that $I$ is not prime and there exists a pretty cleaner monomial $u\not\in I$ with $|\supp(u)|\leq k+1$ such that both $I:u$ and $I+Su$ are pretty $k$-clean. By induction, $I:u$ and $I+Su$ are pretty clean and there are pretty clean filtrations
$$\F_1:I+Su=J_0\subset J_1\subset\ldots\subset J_r=S$$
and
$$\F_2:0=\frac{L_0}{I:u}\subset \frac{L_1}{I:u}\subset\ldots\subset \frac{L_s}{I:u}=\frac{S}{I:u}$$
with $(L_i/I:u)/(L_{i-1}/I:u)\cong S/Q_i(-\a_i)$ where $Q_i$ are prime ideals.
It is known that the multiplication map $\varphi:S/I:u(-\a)\overset{.u}{\longrightarrow}I+Su/I$ is an isomorphism. Restricting $\varphi$ to $L_i/I:u$ yields a monomorphism $\varphi_i:L_i/I:u\overset{.u}{\longrightarrow}I+Su/I$. Set $H_i/I:=\varphi_i(L_i/I:u)$. Hence $H_i/I\cong (L_i/I:u)(-\a)$. It follows that
$$\frac{H_i}{H_{i-1}}\cong \frac{H_i/I}{H_{i-1}/I}\cong \frac{(L_i/I:u)(-\a)}{(L_{i-1}/I:u)(-\a)}\cong \frac{S}{Q_i}(-\a-\a_i).$$
Therefore we obtain the following prime filtration induced from $\F_2$:
$$\F_3:I=H_0\subset H_1\subset\ldots\subset H_s=I+Su.$$
By adding $\F_1$ to $\F_3$ we get the prime filtration
$$\F:I=H_0\subset H_1\subset\ldots\subset H_s=I+Su\subset J_1\subset\ldots\subset J_r=S.$$
Let $Q_i\in\Supp(\F_1)$ and $P_j\in\Supp(\F_2)$ with $P_j\subseteq Q_i$. By \cite[Corollary 3.6]{HePo}, $Q_i\in\ass(I+Su)$ and $P_j\in\ass(I:u)$. Since $u$ is a pretty cleaner we have $P_j=Q_i$. Therefore $I$ is pretty clean.

Conversely, let $I$ be a pretty clean monomial ideal. Then there is a pretty clean filtration
$$\F:(0)=M_0\subset M_1\subset \ldots\subset M_{r-1}\subset M_r=S/I$$
of $S/I$ with $M_i/M_{i-1}\cong S/P_i(-\a_i)$. If $I$ is a prime ideal then we have nothing to prove. Assume that $I$ is not a prime ideal. Since $I$ is pretty clean, by Proposition \ref{pretty He-Po}, there exists a chain of monomial ideals $I=I_0\subset I_1\subset\ldots\subset I_r=S$ and monomials $u_i$ of multidegree $\a_i$ such that $I_i=I_{i-1}+Su_i$ and $I_{i-1}:u_i=P_i$. It is clear that $I_1$ is pretty $k$-clean, where $|\supp(u_1)|\leq k+1$. By Corollary \ref{pretty irr}, $\ass(I_1)=\{P_2,\ldots,P_r\}$. It follows from $P_1\subset P_i\in \ass(I_1)$ that $P_1=P_i$. Hence, since $u_1$ is pretty cleaner, we obtain that $I$ is pretty $k$-clean.
\end{proof}

The following result is an improvement of \cite[Corollary 3.5.]{HePo} in the special case where $M$ is the quotient ring $S/I$.

\begin{thm}\label{pretty k-clean}
Let $I\subset S$ be a pretty $k$-clean monomial ideal. Then $I$ is $k$-clean if and only if $\ass(I)=\min(I)$.
\end{thm}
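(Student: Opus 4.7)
The strategy is to treat the two implications separately: the forward direction is essentially immediate from the recursive definition of $k$-cleanness, while the converse requires an induction on the pretty $k$-cleanness length $l(I)$, with the bulk of the work being the transfer of the no-embedded-primes hypothesis to the child ideals $I:u$ and $I+Su$.

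For the forward direction, if $I$ is $k$-clean then by definition either $I$ is prime (in which case $\ass(I)=\min(I)=\{I\}$) or $I$ is required to have no embedded prime ideals, which is exactly the statement $\ass(I)=\min(I)$. No use of the pretty $k$-clean hypothesis is needed here.

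For the converse, assume $I$ is pretty $k$-clean with $\ass(I)=\min(I)$ and induct on $l(I)$. If $I$ is prime the conclusion is immediate. Otherwise fix a pretty cleaner monomial $u\notin I$ with $|\supp(u)|\leq k+1$ such that $I:u$ and $I+Su$ are pretty $k$-clean of strictly smaller length. By Theorem \ref{Equ. pretty k-clean}, all three ideals are pretty clean, and the prime filtration of $S/I$ built by concatenating pretty clean filtrations of $I:u$ and $I+Su$ (exactly as in the proof of Theorem \ref{Equ. pretty k-clean}) is again pretty clean; consequently its support equals $\ass(I)$, so
$$\ass(I:u)\cup\ass(I+Su)=\ass(I)=\min(I).$$
In particular both $\ass(I:u)$ and $\ass(I+Su)$ consist only of minimal primes of $I$.

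The key step is to propagate the no-embedded-primes condition downward. For any $P\in\ass(I+Su)\subseteq\min(I)$, any prime $P'\subseteq P$ with $P'\supseteq I+Su$ automatically contains $I$, so minimality of $P$ over $I$ forces $P'=P$; hence $P\in\min(I+Su)$, giving $\ass(I+Su)=\min(I+Su)$, and the identical argument yields $\ass(I:u)=\min(I:u)$. The inductive hypothesis then applies to both $I:u$ and $I+Su$, so both are $k$-clean. It remains to upgrade $u$ from pretty cleaner to cleaner: for $Q\in\min(\ass(I+Su))$ we have $Q\in\ass(I+Su)\subseteq\ass(I)=\min(I)=\min(\ass(I))$, so $\min(\ass(I+Su))\subseteq\min(\ass(I))$. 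Combined with $|\supp(u)|\leq k+1$ and $\ass(I)=\min(I)$, this verifies every clause of the recursive definition of $k$-cleanness for $I$.

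The main obstacle is the last upgrade: a pretty cleaner monomial need not be cleaner in general (as the remark preceding the theorem already illustrates), and what makes the argument go through is precisely that the hypothesis $\ass(I)=\min(I)$, fed through the equality of the assembled pretty clean filtration's support with $\ass(I)$, forces $\ass(I+Su)$ to sit inside $\min(I)$ rather than pick up embedded primes of $I$.
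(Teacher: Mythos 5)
Your argument is correct, and it is worth noting that it does substantially more than the paper, whose entire proof of this theorem is the single line ``It follows from the definition.'' The forward direction is indeed definitional. But the converse is not: passing from ``$I$ is pretty $k$-clean and $\ass(I)=\min(I)$'' to ``$I$ is $k$-clean'' requires (a) knowing that the hypothesis $\ass=\min$ descends to the children $I:u$ and $I+Su$, and (b) upgrading the pretty cleaner monomial $u$ to a cleaner monomial in the sense of $\min(\ass(I+Su))\subseteq\min(\ass(I))$. You correctly identify that both hinge on the inclusion $\ass(I:u)\cup\ass(I+Su)\subseteq\ass(I)$, which you extract from the concatenated pretty clean filtration constructed in the proof of Theorem \ref{Equ. pretty k-clean} together with the fact that the support of a pretty clean filtration equals $\Ass$; the short lattice argument that a minimal prime of $I$ containing $I+Su$ (resp.\ $I:u$) is automatically minimal over $I+Su$ (resp.\ $I:u$) then closes the induction on the pretty $k$-cleanness length. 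Each step checks out, and the appeal to the filtration identity is exactly the non-definitional input the paper leaves implicit; your writeup is the more honest version of the proof.
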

\begin{proof}
It follows from the definition.
\end{proof}

\begin{thm}\label{colon}
Let $I\subset S$ be pretty $k$-clean. Then for all monomial $u\in S$, $I:u$ is pretty $k$-clean.
\end{thm}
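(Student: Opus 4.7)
The plan is to induct on the pretty $k$-cleanness length $l(I)$. For the base case $l(I)=0$, $I$ is prime, so $I:u=I$ when $u\notin I$ and $I:u=S$ when $u\in I$; both are pretty $k$-clean (the latter by the convention that the improper ideal $S$ is pretty $k$-clean, as is implicit in the ideal-tree formalism where $S$ appears at the terminal nodes of the filtrations produced by the recursive definition). For the inductive step, pick a pretty cleaner monomial $v\notin I$ with $|\supp(v)|\leq k+1$ witnessing the pretty $k$-cleanness of $I$, so that $I:v$ and $I+Sv$ are pretty $k$-clean and of strictly smaller length; the induction hypothesis thus applies to them paired with any monomial.

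Set $d=\gcd(u,v)$ and $w=v/d$. The argument hinges on two elementary identities for monomial ideals: $(I:v):u=(I:u):v=I:(uv)$, and $(I+Sv):u=(I:u)+(Sv:u)=(I:u)+Sw$, the latter because $(J_1+J_2):h=(J_1:h)+(J_2:h)$ holds for monomial ideals (a monomial lies in the left side iff it does in one of the two colons) and $(v):u=(w)$. I then split into cases. If $u\in I$, then $I:u=S$. If $u\notin I$ but $v\mid u$ (so $w=1$), write $u=vu'$ and observe $I:u=(I:v):u'$, which is pretty $k$-clean by the induction applied to $I:v$ with the monomial $u'$. If $u\notin I$, $v\nmid u$, and $uw\in I$, then $w\in I:u$ forces $(I:u)+Sw=I:u$, so the second identity gives $I:u=(I+Sv):u$, pretty $k$-clean by the induction applied to $I+Sv$.

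In the remaining case ($u\notin I$, $v\nmid u$, $uw\notin I$) I claim $w$ is itself a pretty cleaner for $I:u$. Since $w\mid v$, we have $|\supp(w)|\leq|\supp(v)|\leq k+1$. The two identities show $(I:u):w=(I:v):u$ and $(I:u)+Sw=(I+Sv):u$, and both are pretty $k$-clean by the induction applied to $I:v$ and $I+Sv$. To verify the pretty cleaner condition, suppose $P\in\ass((I:u):w)$ and $Q\in\ass((I:u)+Sw)$ with $P\subseteq Q$; the general containment $\ass(J:h)\subseteq\ass(J)$ (which follows because $S/(J:h)$ embeds as a submodule of $S/J$ via multiplication by $h$) gives $P\in\ass(I:v)$ and $Q\in\ass(I+Sv)$, and since $v$ is pretty cleaner for $I$ we conclude $P=Q$.

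The main obstacle here is not any single step but rather choosing the right bookkeeping: one must correctly identify $w=v/\gcd(u,v)$ as the natural pretty cleaner candidate for $I:u$, and isolate the degenerate situations ($w=1$, or $w\in I:u$) in which $w$ fails to be a proper pretty cleaner monomial; once these cases are peeled off, every remaining branch reduces cleanly to the induction hypothesis via the two identities, and the pretty cleaner property transfers essentially for free along the containment of associated primes.
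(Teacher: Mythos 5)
Your proof is correct, and it is essentially the argument the paper merely outsources (the proof here is just ``see the proof of Theorem 3.1 of [Ra]'', the $k$-clean analogue, which runs on the same induction with the same candidate cleaner $w=v/\gcd(u,v)$ and the same peeling off of the degenerate cases $w=1$ and $w\in I:u$). One identity is misstated: $(I:u):w=I:(uw)=(I:v):(u/\gcd(u,v))$, not $(I:v):u$ (these differ, e.g.\ for $I=(x^2y^2)$, $u=x$, $v=xy$); this is harmless, since all your argument uses is that $(I:u):w$ is a colon of $I:v$ by a monomial, so both the induction hypothesis and the containment $\ass((I:u):w)\subseteq\ass(I:v)$ still apply. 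The only other loose end is the convention that the unit ideal $S$ is pretty $k$-clean, which you correctly flag and which the paper's definition also tacitly requires.
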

\begin{proof}
See the proof of Theorem 3.1. of \cite{Ra}.
\end{proof}

\begin{thm}
The radical of each pretty $k$-clean monomial ideal is pretty $k$-clean and so is $k$-clean.
\end{thm}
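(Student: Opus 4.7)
The plan is to induct on the pretty $k$-cleanness length $l(I)$. When $l(I)=0$, $I$ is a prime ideal, so $\sqrt{I}=I$ is pretty $k$-clean. For the inductive step, fix a pretty cleaner $u\notin I$ with $|\supp(u)|\leq k+1$, so that $I:u$ and $I+Su$ are pretty $k$-clean of strictly smaller length. Let $u'=\prod_{x_i\in\supp(u)}x_i$ denote the squarefree companion of $u$; then $|\supp(u')|=|\supp(u)|\leq k+1$. Since the radical of a monomial ideal is generated by the squarefree parts of any generating set, $\sqrt{I+Su}=\sqrt{I}+Su'$.

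If $u'\in\sqrt{I}$, then $\sqrt{I+Su}=\sqrt{I}$, and the inductive hypothesis applied to $I+Su$ directly yields $\sqrt{I}$ pretty $k$-clean. Assume instead $u'\notin\sqrt{I}$. I claim $u'$ serves as a pretty cleaner for $\sqrt{I}$. The right-hand child is $\sqrt{I}+Su'=\sqrt{I+Su}$, pretty $k$-clean by induction on $I+Su$. For the left-hand child, I invoke the identity $\sqrt{I}:u'=\sqrt{I:u^N}$ for $N$ sufficiently large: the left side equals $\bigcap_{P\in\min(I),\,u\notin P}P$, while the right side equals $\bigcap_{P\in\ass(I),\,u\notin P}P$, and these agree because every embedded prime $P'\in\ass(I)\setminus\min(I)$ with $u\notin P'$ must contain some minimal prime $P\subseteq P'$ for which $u\notin P$ is then automatic. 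By Theorem \ref{colon}, $I:u^N=(I:u):u^{N-1}$ is pretty $k$-clean with length at most $l(I:u)<l(I)$, so the inductive hypothesis yields $\sqrt{I:u^N}=\sqrt{I}:u'$ pretty $k$-clean. Finally, the pretty cleaner condition for $u'$ on $\sqrt{I}$ is vacuously satisfied: any $P\in\ass(\sqrt{I}:u')=\min(I:u^N)$ cannot contain $u$, whereas any $Q\in\ass(\sqrt{I}+Su')=\min(I+Su)$ contains $u\in I+Su\subseteq Q$, so $P\subseteq Q$ is impossible.

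For the $k$-cleanness of $\sqrt{I}$, observe that $\sqrt{I}$ is squarefree, hence $\ass(\sqrt{I})=\min(\sqrt{I})$, and Theorem \ref{pretty k-clean} then upgrades the pretty $k$-clean structure to a $k$-clean one. The trickiest point to verify precisely will be the bound $l(I:u^N)\leq l(I:u)$, which relies on the ideal-tree construction in the proof of Theorem \ref{colon} producing a tree for $I:v$ no longer than the tree for $I$; the primary-decomposition identity for $\sqrt{I}:u'$ and the vacuous pretty cleaner check are comparatively routine.
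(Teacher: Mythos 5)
The paper itself offers no argument for this theorem (it simply defers to the proof of Theorem 3.2 of \cite{Ra}), so your proposal must stand on its own. Its architecture is reasonable: the identities $\sqrt{I+Su}=\sqrt{I}+Su'$ and $\sqrt{I}:u'=\sqrt{I:u^{N}}$ for $N\gg 0$ are correct, and the squarefree part $u'$ of the pretty cleaner $u$ is the right candidate for a pretty cleaner of $\sqrt{I}$. However, the central verification that $u'$ \emph{is} a pretty cleaner of $\sqrt{I}$ contains a genuine logical error. From ``$u\notin P$ and $u\in Q$'' you may conclude $P\neq Q$, but not $P\not\subseteq Q$: a prime not containing $u$ can perfectly well be strictly contained in one that does. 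Since the pretty-cleaner condition demands $P=Q$ whenever $P\subseteq Q$, and $P=Q$ is indeed impossible here, what you must actually prove is that $P\subseteq Q$ never occurs --- and this is not automatic. For instance, with $I=(xy,xz)$ and $u=y$ one has $(x)\in\ass(I:y)$, $(x,y)\in\ass(I+Sy)$, and $(x)\subsetneq(x,y)$; your ``vacuously satisfied'' argument applied verbatim would certify $y$ as a pretty cleaner of the radical ideal $(xy,xz)$ when it is not one. The missing ingredient is the hypothesis that $u$ is a pretty cleaner of $I$: every $P\in\ass(\sqrt{I}:u')$ is a minimal prime of $I$ not containing $u$, hence a minimal (so associated) prime of $I:u$, and every $Q\in\ass(\sqrt{I}+Su')=\min(I+Su)\subseteq\ass(I+Su)$; a containment $P\subseteq Q$ would then force $P=Q$ by the pretty-cleaner property of $u$, contradicting $u\in Q\setminus P$. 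With that repair the step goes through.

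The second weak point is the one you flagged yourself: the bound $l(I:u^{N})\leq l(I:u)<l(I)$ is nowhere established, and the statement of Theorem \ref{colon} carries no length estimate. You can sidestep it entirely by observing that $\sqrt{I}:u'=\bigcap_{P\in\min(I),\,u\notin P}P=\sqrt{I:u}\,:u'$; apply the induction hypothesis to $I:u$ (whose length is strictly smaller) to conclude that $\sqrt{I:u}$ is pretty $k$-clean, and then apply Theorem \ref{colon} once to the radical ideal $\sqrt{I:u}$ itself. The remaining pieces --- the base case, the case $u'\in\sqrt{I}$, and the final passage from pretty $k$-clean to $k$-clean via Theorem \ref{pretty k-clean} using the fact that a radical ideal has no embedded primes --- are fine.
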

\begin{proof}
See the proof of Theorem 3.2. of \cite{Ra}.
\end{proof}

\begin{rem}
For some examples of pretty $k$-clean monomial ideals see \cite{Ra}.
\end{rem}

\section{$k$-decomposable multicomplexes}

The aim of this section is to extend the concept of $k$-decomposability to multicomplexes. We first define some notions. Let $\Gamma$ be a multicomplex and $\a\in\Gamma$. We define the \emph{star}, \emph{deletion} and \emph{link} of $\a$ in $\Gamma$, respectively, as follows:

$$\begin{array}{l}
  \s_\Gamma\a=\langle\b\in\F(\Gamma)|\a\preceq\b\rangle, \\
  \Gamma\backslash\a=\langle\b\in\F(\Gamma)|\a\npreceq\b\rangle, \\
  \lk_\Gamma\a=\langle\b-\a:\b\in\F(\Gamma)\ \mbox{and}\ \a\preceq\b\rangle.
\end{array}
$$
For the multicomplexes $\Gamma_1,\Gamma_2\subset\NN^n_\infty$, the \emph{join} of $\Gamma_1$ and $\Gamma_2$ is defined to be $$\Gamma_1\cdot\Gamma_2=\{\a+\b:\a\in\Gamma_1,\b\in\Gamma_2\}.$$

One can easily check that
$$\begin{array}{l}
  \s_\Gamma\a=\langle\a\rangle\cdot\lk_\Gamma\a, \\
  \s_\Gamma\a=\{\b\in\Gamma|\a\vee\b\in\Gamma\}\ \mbox{and} \\
  \lk_\Gamma\a=\{\a\vee\b-\a:\a\vee\b\in\Gamma\}.
\end{array}$$

If $\{\a_1,\ldots,\a_r\}\subset \NN^n_\infty$, then
$$\Gamma\backslash\{\a_1,\ldots,\a_r\}=\langle \b\in\F(\Gamma):\a_i\npreceq\b\ \mbox{for all}\ i\rangle=\bigcap^r_{i=1}\Gamma\backslash\a_i.$$

\begin{exam}
Let $\Gamma=\langle(2,\infty),(3,0)\rangle$. Then $$\F(\Gamma)=\{(0,\infty),(1,\infty),(2,\infty),(3,0)\}.$$

For $\a=(2,1)$ we have
$$\begin{array}{l}
    \s_\Gamma\a=\langle(2,\infty)\rangle, \\
    \Gamma\backslash\a=\langle(0,\infty),(1,\infty),(3,0)\rangle, \\
    \lk_\Gamma\a=\langle(0,\infty)\rangle.
  \end{array}
$$
\end{exam}

\begin{defn}\label{shedding face}
Let $\Gamma$ be a ($d-1$)-dimensional multicomplex and let $0\leq k\leq d-1$. An element $\a\in\Gamma\cap\NN^n$ with $|\fpt^\ast(\a)|\leq k+1$ is called \emph{shedding face} if it satisfies the following conditions:
\begin{enumerate}[\upshape (i)]
  \item for all $\b\in \F(\s_\Gamma(\a))$, $\langle\b\rangle\backslash(\Gamma\backslash\a)$ is a Stanley set of degree $\a$;
  \item for every $\b\in \F(\s_\Gamma(\a))$ and every $\c\in \F(\Gamma\backslash\a)$ if $\fpt(\b)\subseteq\fpt(\c)$ then $\fpt(\b)=\fpt(\c)$.
\end{enumerate}
\end{defn}

\begin{defn}\label{k-decom}
Let $\Gamma$ be a ($d-1$)-dimensional multicomplex and let $0\leq k\leq d-1$. We say that $\Gamma$ is \emph{$k$-decomposable} if it has only one facet or there exists a shedding face $\a\in\Gamma$ with $|\fpt^\ast(\a)|\leq k+1$ such that both $\lk_\Gamma(\a)$ and $\Gamma\backslash\a$ are $k$-decomposable.
\end{defn}

\begin{rem}\label{one facet}
Note that for a multicomplex $\Gamma$ with $\F(\Gamma)=\{\a\}$ one has $\a\in\{0,\infty\}^n$ (see \cite[Corollary 9.11]{HePo}).
\end{rem}

Now we discuss some structural properties of $k$-decomposable multicomplexes.

\begin{thm}\label{link k-decom}
Let $\Gamma$ be a $k$-decomposable multicomplex. Then for all $\a\in\Gamma$, $\lk_\Gamma\a$ is $k$-decomposable.
\end{thm}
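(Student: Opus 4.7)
The plan is to induct on the recursive depth of the $k$-decomposable structure of $\Gamma$ (equivalently, on $|\M(\Gamma)|$). For the base case, $\Gamma=\langle\a_0\rangle$ has a single facet, which by Remark \ref{one facet} lies in $\{0,\infty\}^n$. Then for any $\a\preceq\a_0$ the link $\lk_\Gamma\a=\langle\a_0-\a\rangle$ again has a single facet, still in $\{0,\infty\}^n$, so is $k$-decomposable. For the inductive step, let $\b\in\Gamma\cap\NN^n$ be a shedding face with $|\fpt^\ast(\b)|\leq k+1$ witnessing the $k$-decomposability of $\Gamma$, so that $\lk_\Gamma\b$ and $\Gamma\backslash\b$ are $k$-decomposable. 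Fix $\a\in\Gamma$ and split into cases according to the relation between $\a$ and $\b$.

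If $\a\vee\b\notin\Gamma$, then every facet $\c\in\F(\Gamma)$ with $\a\preceq\c$ satisfies $\b\not\preceq\c$ (otherwise $\a\vee\b\preceq\c\in\Gamma$); such $\c$ is then also a facet of $\Gamma\backslash\b$, so the generating sets of $\lk_\Gamma\a$ and $\lk_{\Gamma\backslash\b}\a$ coincide and hence $\lk_\Gamma\a=\lk_{\Gamma\backslash\b}\a$, which is $k$-decomposable by the induction hypothesis applied to $\Gamma\backslash\b$. If $\b\preceq\a$, a direct computation on the generators gives $\lk_\Gamma\a=\lk_{\lk_\Gamma\b}(\a-\b)$, and the induction hypothesis applied to $\lk_\Gamma\b$ finishes this case. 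The remaining case is $\a\vee\b\in\Gamma$ with $\b\not\preceq\a$. Here I propose the candidate shedding face $\b':=\b-(\a\wedge\b)$ of $\lk_\Gamma\a$; since $\fpt^\ast(\b')\subseteq\fpt^\ast(\b)$ the cardinality bound $|\fpt^\ast(\b')|\leq k+1$ is inherited. From the pointwise identity $\b'\preceq\c-\a\Longleftrightarrow \b\preceq\c$ (valid whenever $\a\preceq\c$) one derives
\[
(\lk_\Gamma\a)\backslash\b' \;=\; \lk_{\Gamma\backslash\b}\a, \qquad \lk_{\lk_\Gamma\a}\b' \;=\; \lk_{\lk_\Gamma\b}\bigl(\a-(\a\wedge\b)\bigr),
\]
the second identity using $(\c-\a)-\b'=\c-(\a\vee\b)=(\c-\b)-(\a-(\a\wedge\b))$. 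Both right-hand sides are $k$-decomposable by the induction hypothesis applied to $\Gamma\backslash\b$ and $\lk_\Gamma\b$ respectively.

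The main obstacle will be verifying that $\b'$ actually satisfies conditions (i) and (ii) of Definition \ref{shedding face} inside $\lk_\Gamma\a$. For (i), given $\d\in\F(\s_{\lk_\Gamma\a}(\b'))$, one must show $\langle\d\rangle\backslash((\lk_\Gamma\a)\backslash\b')$ is a Stanley set of degree $\b'$; the natural strategy is to transport the Stanley-set decomposition at degree $\b$ guaranteed by the shedding property of $\b$ in $\Gamma$, restricted to facets above $\a\vee\b$, via the translation $\c\mapsto\c-\a$, and use $\b=\b'+(\a\wedge\b)$ to adjust the base point. For (ii), the $\fpt$-incomparability for $\b'$ in $\lk_\Gamma\a$ transfers from that for $\b$ in $\Gamma$ using $\fpt(\c-\a)=\fpt(\c)$ (valid for $\a\in\NN^n$) combined with the facet correspondences established above. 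Once these two transfers are carried out carefully, the recursion step is complete and the theorem follows.
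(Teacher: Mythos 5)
Your proposal is correct and follows essentially the same route as the paper: the same three-case split (on whether $\a\vee\b\in\Gamma$ and whether $\b\preceq\a$), and your candidate shedding face $\b-(\a\wedge\b)$ coincides coordinatewise with the paper's $\a\vee\b-\a$, yielding the same identities for the deletion and the link. The transfers of conditions (i) and (ii) that you only sketch are carried out in the paper exactly as you describe, via the correspondence $\c\mapsto\c+\a$ between facets of $\s_{\lk_\Gamma\a}(\b')$ and facets of $\s_\Gamma\b$ together with $\fpt(\c+\a)=\fpt(\c)$, so there is no substantive gap.
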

\begin{proof}
If $\Gamma$ has just one facet then we have no thing to prove. Suppose that $|\F(\Gamma)|>1$ and there is a shedding face $\b\in\Gamma$ with $|\fpt^\ast(\b)|\leq k+1$.

Case 1. Let $\b\preceq\a$ and $\a\vee\b\in\Gamma$. Then $\lk_\Gamma\a=\lk_{\lk_\Gamma\b}(\a-\b)$. Since $|\F(\lk_\Gamma\b)|\leq |\F(\Gamma)|$, it follows from induction hypothesis that $\lk_{\lk_\Gamma\b}(\a-\b)$ is $k$-decomposable.

Case 2. Let $\b\npreceq\a$ and $\a\vee\b\in\Gamma$. Then
$$\lk_\Gamma\a\backslash(\a\vee\b-\a)=\lk_{\Gamma\backslash\b}\a$$
and
$$\lk_{\lk_\Gamma\a}(\a\vee\b-\a)=\lk_\Gamma(\a\vee\b)=\lk_{\lk_\Gamma\b}(\a\vee\b-\b).$$
Now, since $|\F(\lk_\Gamma\b)|\leq\F(\Gamma)|$ and $|\F(\Gamma\backslash\b)|\leq |\F(\Gamma)|$ we conclude that $\lk_{\lk_\Gamma\a}(\a\vee\b-\a)$ and $\lk_\Gamma\a\backslash(\a\vee\b-\a)$ are $k$-decomposable, by induction hypothesis. Now we show that $\a\vee\b-\a$ is a shedding face of $\lk_\Gamma\a$.

Let $\c\in\F(\s_{\lk_\Gamma\a}(\a\vee\b-\a))$. Hence, since $\s_{\lk_\Gamma\a}(\a\vee\b-\a)=\lk_{\s_\Gamma\b}(\a)$ we get $\c+\a\in\F(\s_{\s_\Gamma\b}(\a))$. Thus $\c+\a\in\F(\s_\Gamma\b)$. It follows that there is $m\in\{0,\infty\}^n$ such that $\langle\c+\a\rangle\backslash(\Gamma\backslash\b)=\b+\langle m\rangle$. This implies that
$$\langle\c\rangle\backslash (\lk_\Gamma\a\backslash(\a\vee\b-\a))=\langle\c\rangle\backslash (\lk_{\Gamma\backslash\b}\a)=\a\vee\b-\a+\langle m\rangle.$$

Let $\u\in\F(\s_{\lk_\Gamma\a}(\a\vee\b-\a))$ and $\v\in\F((\lk_\Gamma\a)\backslash(\a\vee\b-\a))$ with $\fpt(\u)\subseteq\fpt(\v)$. Then we have $\u+\a\in\F(\s_\Gamma\b)$, $\v+\a\in\F(\Gamma\backslash\b)$ and $\fpt(\u+\a)\subseteq\fpt(\v+\a)$. Because $\b$ is a shedding face of $\Gamma$ we get $\fpt(\u+\a)=\fpt(\v+\a)$. It follows that $\fpt(\u)=\fpt(\v)$.

Case 3. Let $\a\vee\b\not\in\Gamma$. Then $\lk_\Gamma\a=\lk_{\Gamma\backslash\b}\a$.
Since $|\F(\Gamma\backslash\b)|\leq |\F(\Gamma)|$, it follows from induction hypothesis that $\lk_\Gamma\a$ is $k$-decomposable.

\end{proof}

\begin{thm}\label{one max-facet}
Let $\Gamma\in\NN^n_\infty$ be a multicomplex which has just one maximal facet $\b$. Then $\Gamma$ is $k$-decomposable if and only if $|\fpt^\ast(\b)|\leq k+1$.
\end{thm}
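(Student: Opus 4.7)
My plan is to prove the two directions by induction, with the forward direction being a short structural consequence of condition (i) applied to the unique maximal facet $\b$, and the backward direction an induction on $N(\b):=\sum_{i\in\fpt(\b)}\b(i)$ using the truncation of $\b$ as the distinguished shedding face.

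For the forward direction, if $\F(\Gamma)=\{\b\}$ then Remark \ref{one facet} gives $\b\in\{0,\infty\}^n$ and $|\fpt^\ast(\b)|=0$; otherwise I would fix a shedding face $\a\in\Gamma\cap\NN^n$ with $|\fpt^\ast(\a)|\leq k+1$. Since $\b$ is the unique maximal facet and $\a\preceq\b$, we have $\s_\Gamma\a=\langle\b\rangle$, and condition (i) of Definition \ref{shedding face} applied to the facet $\b$ of $\s_\Gamma\a$ gives $\Gamma\backslash(\Gamma\backslash\a)=\a+\langle m\rangle$ for some $m\in\{0,\infty\}^n$. Because $\b$ is the unique maximal element of $\Gamma$ and lies in this Stanley set, it must coincide with the maximal element $\a+m$ of $\a+\langle m\rangle$; writing $F=\{i:m(i)=\infty\}$, this forces $\b(i)=\a(i)$ for $i\notin F$ and $\b(i)=\infty$ for $i\in F$, so $\fpt^\ast(\b)\subseteq\fpt^\ast(\a)$ and $|\fpt^\ast(\b)|\leq k+1$.

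For the backward direction, I would assume $|\fpt^\ast(\b)|\leq k+1$ and induct on $N(\b)$. The base case $N(\b)=0$ gives $\b\in\{0,\infty\}^n$, so $\F(\Gamma)=\{\b\}$ and $\Gamma$ is $k$-decomposable by definition. For $N(\b)\geq 1$, let $\a$ be the truncation of $\b$ ($\a(i)=\b(i)$ for $i\in\fpt(\b)$ and $\a(i)=0$ for $i\in\infpt(\b)$). Then $|\fpt^\ast(\a)|=|\fpt^\ast(\b)|\leq k+1$, $\lk_\Gamma\a=\langle\b-\a\rangle$ has its unique generator in $\{0,\infty\}^n$ and is therefore $k$-decomposable as a single-facet multicomplex, and a direct computation shows $\Gamma\backslash(\Gamma\backslash\a)=\a+\langle m\rangle$ with $m$ equal to $0$ on $\fpt(\b)$ and $\infty$ on $\infpt(\b)$; together with $\fpt(\b-e_i)=\fpt(\b)$ for each $i\in\fpt^\ast(\b)$, this verifies the shedding axioms for $\a$. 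The hard part is then showing that $\Gamma\backslash\a=\langle\b-e_i:i\in\fpt^\ast(\b)\rangle$ is $k$-decomposable, since it has $|\fpt^\ast(\b)|$ incomparable maximal facets and the single-maximal-facet hypothesis does not apply directly; the plan is a nested peeling that fixes an ordering $i_1,\ldots,i_s$ of $\fpt^\ast(\b)$ and, for $t=1,\ldots,\b(i_1)$, successively sheds the truncation of $\b-te_{i_1}$ from the intermediate multicomplex $\langle\b-te_{i_1},\b-e_{i_2},\ldots,\b-e_{i_s}\rangle$ by a computation strictly parallel to the one for $\a$ (the star has unique maximal generator $\b-te_{i_1}$, the link collapses to a single facet in $\{0,\infty\}^n$, and the required Stanley set again has the ``infinity on $\infpt(\b)$'' form), reaching $\langle\b-e_{i_2},\ldots,\b-e_{i_s}\rangle$ after $\b(i_1)$ such steps and iterating on $i_2,\ldots,i_s$ until only the single-maximal-facet multicomplex $\langle\b-e_{i_s}\rangle$ remains, which has $N$-value strictly less than $N(\b)$ and hence is $k$-decomposable by the outer induction hypothesis.
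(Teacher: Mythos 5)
Your proof is correct and follows essentially the same route as the paper: the forward direction reads off $\fpt^\ast(\b)\subseteq\fpt^\ast(\a)$ from the Stanley set $\langle\b\rangle\backslash(\Gamma\backslash\a)=\a+\langle m\rangle$, and the backward direction sheds the truncation of $\b$ and then peels the deletion one unit at a time. Your write-up merely makes explicit the induction (on $N(\b)$) and the nested peeling of $\Gamma\backslash\a$ that the paper compresses into ``the proof is completed inductively.''
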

\begin{proof}
``Only if part'': Let $\Gamma$ be $k$-decomposable. If $\Gamma$ has only one facet then the assertion follows from Remark \ref{one facet}. Suppose that $|\F(\Gamma)|>1$ and let $\a$ be a shedding face of $\Gamma$ with $|\fpt^\ast(\a)|\leq k+1$ such that $\lk_\Gamma\a$ and $\Gamma\backslash\a$ are $k$-decomposable. Since $\b\in\F(\s_\Gamma\a)$, there exists $m\in\{0,\infty\}^n$ such that $\langle\b\rangle\backslash(\Gamma\backslash\a)=\a+\langle m\rangle$. Note that $\infpt(\b)=\infpt(m)$.

Let $0<\b(i)<\infty$ for some $i$. If $\a(i)=0$ then since $\b\in\a+\langle m\rangle$ we have $0<m(i)<\infty$, a contradiction. Therefore $\a_(i)\neq 0$. This implies that $\fpt^\ast(\b)\subseteq\fpt^\ast(\a)$. Hence $|\fpt^\ast(\b)|\leq k+1$.

``If part'': If $|\fpt^\ast(\b)|=0$ then $\b\in\{0,\infty\}^n$ and so $\Gamma$ has just one facet. Hence $\Gamma$ is $k$-decomposable. Suppose that $|\fpt^\ast(\b)|>0$. We show that $\a$ with
$$\a(i)=\left\{
    \begin{array}{ll}
      \b(i), & \b(i)\neq\infty \\
      0, & \hbox{otherwise}
    \end{array}
  \right.
$$

is a shedding face of $\Gamma$.

Since $\F(\lk_\Gamma(\a))=\{\b-\a\}$, it follows that $\lk_\Gamma\a$ is $k$-decomposable. We have $\langle\b\rangle\backslash(\Gamma\backslash\a)=\a+\langle m\rangle$ where $\infpt(m)=\infpt(\b)$. Since for all $\c\in\F(\Gamma)$, $\infpt(\c)=\infpt(\b)$ it follows that $\fpt(\c)=\fpt(\b)$ and so the condition (ii) of Definition \ref{shedding face} holds. It remains to show that $\Gamma\backslash\a$ is $k$-decomposable.

Let $0<\b(i)<\infty$. Set
$$\c(j)=\left\{
    \begin{array}{ll}
      \b(j), & j\neq i \\
      \b(i)-1, & j=i
    \end{array}
  \right.
$$
In a similar way to $\a$ for $\Gamma$, we show that $\c$ is a shedding face of $\Gamma\backslash\a$. The proof is completed inductively.

Consequently, $\Gamma$ is $k$-decomposable.
\end{proof}

Two multicomplexes $\Gamma_1,\Gamma_2\subset\NN^n_\infty$ are called \emph{disjoint} whenever there exists $1<m<n$ such that $\a\in\Gamma_1$ (resp. $\a\in\Gamma_2$) implies $\a(i)=0$ for $i>m$ (resp. $\a(i)=0$ for $i\leq m$).

\begin{thm}\label{join}
Let $\Gamma_1$ and $\Gamma_2$ be two disjoint multicomplexes. If $\Gamma_1\cdot\Gamma_2$ is $k$-decomposable then $\Gamma_1$ and $\Gamma_2$ are $k$-decomposable. The converse holds, if in addition, $\F(\Gamma_2)\subset\{0,\infty\}^n$.
\end{thm}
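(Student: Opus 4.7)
The forward direction follows from Theorem \ref{link k-decom}. Pick any maximal facet $m_2 \in \M(\Gamma_2)$; disjointness of supports lets me regard $m_2$ as an element of $\Gamma_1 \cdot \Gamma_2$, and any facet $\b = \b_1 + \b_2$ of $\Gamma_1 \cdot \Gamma_2$ with $m_2 \preceq \b$ must have $m_2 \preceq \b_2$. Maximality of $m_2$ then forces $\b_2 = m_2$, so $\b - m_2 = \b_1$ ranges precisely over $\F(\Gamma_1)$ as $\b$ ranges over facets of $\Gamma_1 \cdot \Gamma_2$ above $m_2$. Hence $\lk_{\Gamma_1 \cdot \Gamma_2}(m_2) = \Gamma_1$, which by Theorem \ref{link k-decom} is $k$-decomposable, and the symmetric argument using $m_1 \in \M(\Gamma_1)$ handles $\Gamma_2$.

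For the converse I assume $\Gamma_1, \Gamma_2$ are $k$-decomposable and $\F(\Gamma_2) \subset \{0, \infty\}^n$, and induct on $|\F(\Gamma_1)| + |\F(\Gamma_2)|$. The base case (both factors single-faceted) follows from Remark \ref{one facet} combined with the hypothesis: both facets lie in $\{0, \infty\}^n$, the unique facet of $\Gamma_1 \cdot \Gamma_2$ is their sum in $\{0,\infty\}^n$, and Theorem \ref{one max-facet} applies. In the inductive step I pick a shedding face $\a_1$ of whichever factor has more than one facet (if $|\F(\Gamma_1)|=1$ then $\Gamma_1 = \langle \a_1 \rangle$ with $\a_1 \in \{0,\infty\}^n$ and I instead pick a shedding face $\b$ of $\Gamma_2$; the argument is symmetric). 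Since $\a_1$ has support only in the first block of variables one verifies the identities $\s_{\Gamma_1 \cdot \Gamma_2}(\a_1) = \s_{\Gamma_1}(\a_1) \cdot \Gamma_2$ and $(\Gamma_1 \cdot \Gamma_2) \setminus \a_1 = (\Gamma_1 \setminus \a_1) \cdot \Gamma_2$, so once $\a_1$ is promoted to a shedding face of $\Gamma_1 \cdot \Gamma_2$ the induction applies to the joins $\lk_{\Gamma_1}(\a_1) \cdot \Gamma_2$ and $(\Gamma_1 \setminus \a_1) \cdot \Gamma_2$.

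The heart of the argument is this promotion. For condition (i), a typical facet $\b = \c_1 + \c_2 \in \F(\s_{\Gamma_1 \cdot \Gamma_2}(\a_1))$ yields
$$\langle \b \rangle \setminus ((\Gamma_1 \cdot \Gamma_2) \setminus \a_1) = (\langle \c_1 \rangle \setminus (\Gamma_1 \setminus \a_1)) \cdot \langle \c_2 \rangle = (\a_1 + \langle m_1 \rangle) \cdot \langle \c_2 \rangle = \a_1 + \langle m_1 + \c_2 \rangle,$$
where $\a_1 + \langle m_1 \rangle$ with $m_1 \in \{0,\infty\}^n$ is furnished by the shedding property of $\a_1$ in $\Gamma_1$; the hypothesis $\c_2 \in \F(\Gamma_2) \subset \{0,\infty\}^n$ ensures $m_1 + \c_2 \in \{0,\infty\}^n$, so this is a genuine Stanley set of degree $\a_1$. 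Condition (ii) is the main obstacle and is exactly where the hypothesis $\F(\Gamma_2) \subset \{0,\infty\}^n$ is indispensable: given $\u = \u_1 + \u_2 \in \F(\s_{\Gamma_1 \cdot \Gamma_2}(\a_1))$ and $\v = \v_1 + \v_2 \in \F((\Gamma_1 \cdot \Gamma_2) \setminus \a_1)$ with $\fpt(\u) \subseteq \fpt(\v)$, disjointness splits this into $\fpt(\u_i) \subseteq \fpt(\v_i)$; the first-block equality is the shedding property of $\a_1$ in $\Gamma_1$, and for the second block the hypothesis forces every facet of $\Gamma_2$ to be maximal, so $\fpt(\u_2) \subseteq \fpt(\v_2)$ translates to $\v_2 \preceq \u_2$ in $\{0,\infty\}^n$, which between maximals forces $\u_2 = \v_2$. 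Without this hypothesis two distinct facets of $\Gamma_2$ could carry a proper $\fpt$-inclusion and the shedding property of $\a_1$ would fail to lift to the join.
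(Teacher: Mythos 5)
Your proof is correct. The interesting divergence is in the forward direction: the paper takes the shedding face $\a=\a_1+\a_2$ of $\Gamma_1\cdot\Gamma_2$ and shows by induction that $\a_1$ is a shedding face of $\Gamma_1$, which requires decomposing $\langle\b\rangle\backslash(\Gamma\backslash\a)$ as a product to verify the Stanley-set condition and a separate lifting argument for condition (ii); you instead observe that $\lk_{\Gamma_1\cdot\Gamma_2}(m_2)=\Gamma_1$ for any $m_2\in\M(\Gamma_2)$ (maximality forces the second block of every facet above $m_2$ to equal $m_2$) and invoke Theorem \ref{link k-decom}. Your route is shorter and avoids the bookkeeping, at the cost of yielding only $k$-decomposability rather than the paper's extra information that the components $\a_i$ of the join's shedding face are themselves shedding faces of the factors; note also that your computation of $\b-m_2$ at a face with infinite entries relies on the convention $\infty-\infty=0$, which is the one implicit in the paper's identity $\lk_\Gamma\a=\{\a\vee\b-\a:\a\vee\b\in\Gamma\}$ and in the proof of Theorem \ref{link k-decom}, so this is consistent with the framework. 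Your converse is essentially the paper's argument --- same identities $\lk_\Gamma(\a_1)=\lk_{\Gamma_1}(\a_1)\cdot\Gamma_2$ and $\Gamma\backslash\a_1=(\Gamma_1\backslash\a_1)\cdot\Gamma_2$, same Stanley-set computation $\a_1+\langle m_1+\c_2\rangle$, same splitting of $\fpt(\u)\subseteq\fpt(\v)$ blockwise --- but you are more careful on two points the paper glosses over: the degenerate case $|\F(\Gamma_1)|=1$ (where Remark \ref{one facet} supplies $\F(\Gamma_1)\subset\{0,\infty\}^n$ so the roles of the factors can be swapped), and the explicit verification that $\F(\Gamma_2)\subset\{0,\infty\}^n$ forces every facet of $\Gamma_2$ to be maximal, which is what actually closes condition (ii) in the second block.
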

\begin{proof}
Note that $\Gamma=\Gamma_1\cdot\Gamma_2$ has one facet if and only if $\Gamma_1$ and $\Gamma_2$ have one facet. So assume that $|\F(\Gamma_1)|>1$ or $|\F(\Gamma_2)|>1$.

For every face $\a\in\Gamma$ we have
\begin{equation}\label{1}
    \lk_\Gamma(\a)=\lk_{\Gamma_1}(\a_1)\cdot\lk_{\Gamma_2}(\a_2)
\end{equation}
and
\begin{equation}\label{2}
\Gamma\backslash\a=(\Gamma_1\backslash\a_1)\cdot\Gamma_2\cup\Gamma_1\cdot(\Gamma_2\backslash\a_2)
\end{equation}
where $\a_1\in\Gamma_1$, $\a_2\in\Gamma_2$ and $\a=\a_1+\a_2$.

``Only if part'': Let $\Gamma_1\cdot\Gamma_2$ be $k$-decomposable with shedding face $\a=\a_1+\a_2$ where $\a_i\in\Gamma_i$. We want to show that $\Gamma_i$ is $k$-decomposable with shedding face $\a_i$. We may assume that $\s_{\Gamma_1}\a_1\neq\Gamma_1$. Since $\lk_\Gamma\a_1=\lk_{\Gamma_1}\a_1\cdot\Gamma_2$, we get $\lk_{\Gamma_1}\a_1$ and $\Gamma_2$ are $k$-decomposable, by induction. On the other hand, $\Gamma\backslash\a$ is $k$-decomposable. Hence $\lk_{\Gamma\backslash\a}\a_2=\Gamma_1\backslash\a_1\cdot\lk_{\Gamma_2}\a_2$ is $k$-decomposable, by Theorem \ref{link k-decom}. Thus $\Gamma_1\backslash\a_1$ is $k$-decomposable, by induction.

Let $\b_1\in\F(\s_{\Gamma_1}\a_1)$. Choose a facet $\b_2\in\F(\s_{\Gamma_2}\a_2)$ and set $\b=\b_1+\b_2$. Then $\b\in\F(\s_\Gamma\a)$ and
\begin{equation}\label{3}
    \begin{array}{rl}
\langle\b\rangle\backslash(\Gamma\backslash\a)=& [\langle\b\rangle\backslash(\Gamma_1\backslash\a_1\cdot\Gamma_2)]\cap[\langle\b\rangle\backslash(\Gamma_1\cdot\Gamma_2\backslash\a_2)] \\
    = & [\langle\b_1\rangle\backslash(\Gamma_1\backslash\a_1)\cdot\langle\b_2\rangle]\cap[\langle\b_1\rangle\cdot\langle\b_2\rangle\backslash(\Gamma_2\backslash\a_2)]\\
    = & \langle\b_1\rangle\backslash(\Gamma_1\backslash\a_1)\cdot\langle\b_2\rangle\backslash(\Gamma_2\backslash\a_2).
  \end{array}
\end{equation}
On the other hand, $\langle\b\rangle\backslash(\Gamma\backslash\a)=\a+\langle m\rangle$ where $m(i)\in\{0,\infty\}$. Let $\a=\a_1+\a_2$ and $m=m_1+m_2$ where $\a_i,m_i\in\Gamma_i$. We conclude from (\ref{3}) that $\langle\b_1\rangle\backslash(\Gamma_1\backslash\a_1)=\a_1+\langle m_1\rangle$.

Let $\b_1\in\F(\s_{\Gamma_1}\a_1)$ and $\c_2\in\F(\Gamma_1\backslash\a_1)$ with $\fpt(\b_1)\subseteq\fpt(\c_1)$. Choose $\b_2\in\F(\s_{\Gamma_2}\a_2)$ and $\c_2\in\F(\Gamma_2\backslash\a_2)$ with $\fpt(\b_2)\subseteq\fpt(\c_2)$. Then there is $\c'_2\in\M(\Gamma_2)$ such that $\c_2\preceq\c'_2$. It follows that $\b_1+\b_2\in\F(\s_\Gamma\a)$ and $\c_1+\c'_2\in\F(\Gamma\backslash\a)$. Note that $\fpt(\c_2)=\fpt(\c'_2)$. Therefore $\fpt(\b_1+\b_2)=\fpt(\c_1+\c'_2)$. In particular, $\fpt(\b_1+\b_2)=\fpt(\c_1+\c_2)$. It follows that $\fpt(\b_1)=\fpt(\c_1)$. Therefore $\a_1$ is a shedding face of $\Gamma_1$.

``If part'': Let $\Gamma_1$ and $\Gamma_2$ be $k$-decomposable with shedding faces $\a_1$ and $\a_2$, respectively, and let $\F(\Gamma_2)\subset\{0,\infty\}^n$. We claim that $\a_1$ is a shedding face of $\Gamma$.

It follows from relations (\ref{1}) and (\ref{2}) that
$$\lk_\Gamma(\a_1)=\lk_{\Gamma_1}(\a_1)\cdot\Gamma_2,\quad \Gamma\backslash\a_1=(\Gamma_1\backslash\a_1)\cdot\Gamma_2.$$
By induction hypothesis, $\lk_\Gamma(\a_1)$ and $\Gamma\backslash\a_1$ are $k$-decomposable.

Let $\b=\b_1+\b_2\in \F(\s_\Gamma(\a_1))$ where $\b_i\in\Gamma_i$. Then $$\langle\b\rangle\backslash(\Gamma\backslash\a_1)=[\langle\b_1\rangle\backslash(\Gamma_1\backslash\a_1)]\cdot\langle\b_2\rangle=(\a_1+\langle m\rangle)\cdot\langle\b_2\rangle=\a_1+\langle m+\b_2\rangle$$ is a Stanley set.

Let $\b=\b_1+\b_2\in \F(\s_\Gamma\a_1)$ and $\c=\c_1+\c_2\in \F(\Gamma\backslash\a_1)$ with $\b_i,\c_i\in\Gamma_i$. Suppose that $\fpt(\b)\subseteq\fpt(\c)$. Then $\fpt(\b_i)\subseteq\fpt(\c_i)$, for $i=1,2$. Since $\b_2$ and $\c_2$ are facets of $\Gamma_2$ and, moreover, $\F(\Gamma_2)\subset\{0,\infty\}^n$, we have $\infpt(\c_2)=\infpt(\b_2)$, by definition. Thus $\fpt(\b_2)=\fpt(\c_2)$. On the other hand, by $k$-decomposability of $\Gamma_1$, $\fpt(\b_1)=\fpt(\c_1)$. Therefore $\fpt(\b)=\fpt(\c)$, as desired.
\end{proof}

Now we come to the main result of this section.

\begin{thm}\label{shell k-decom}
Every $k$-decomposable multicomplex $\Gamma$ is shellable. Also, every shellable multicomplex is $k$-decomposable for some $k\geq 0$.
\end{thm}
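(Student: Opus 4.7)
My plan is to induct on $|\F(\Gamma)|$ for both directions, mirroring the inductive structure of Definition \ref{k-decom} and the Herzog--Popescu notion of shellability. When $|\F(\Gamma)| = 1$ both notions are trivial (one facet is a shelling, and the lone maximal element in $\{0,\infty\}^n$ from Remark \ref{one facet} forces the Stanley-set conditions to be vacuous), so in both directions I only need to handle the inductive step.

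For the forward direction, suppose $\Gamma$ is $k$-decomposable with shedding face $\a$, $|\fpt^*(\a)|\le k+1$, so by hypothesis $\Gamma\backslash\a$ and $\lk_\Gamma\a$ are both $k$-decomposable with strictly fewer facets. By induction, each admits a shelling, say $\b_1,\dots,\b_s$ of $\Gamma\backslash\a$ and $\c_1,\dots,\c_t$ of $\lk_\Gamma\a$. Using the identities $\s_\Gamma\a = \langle\a\rangle\cdot\lk_\Gamma\a$ and $\F(\Gamma) = \F(\Gamma\backslash\a)\sqcup\F(\s_\Gamma\a)$, I would propose the concatenated ordering
$$\b_1,\dots,\b_s,\;\a+\c_1,\dots,\a+\c_t$$
as a shelling of $\Gamma$. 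Checking condition (1) of shellability breaks into three cases: indices $i\le s$ reuse the shelling of $\Gamma\backslash\a$ directly; for an index $s+j$ with $j\ge 1$ I write
$$\langle\a+\c_j\rangle\setminus\bigl(\langle\b_1,\dots,\b_s\rangle\cup\langle\a+\c_1,\dots,\a+\c_{j-1}\rangle\bigr)$$
as the intersection of $\langle\a+\c_j\rangle\setminus(\Gamma\backslash\a)$, which is a Stanley set of degree $\a$ by condition (i) of Definition \ref{shedding face}, with $\a + \bigl(\langle\c_j\rangle\setminus\langle\c_1,\dots,\c_{j-1}\rangle\bigr)$, which is $\a$ translated by a Stanley set coming from the shelling of $\lk_\Gamma\a$; the intersection of two Stanley sets sharing the degree $\a$ is again a Stanley set. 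Condition (2) is trivial when both indices lie on the same side of the split; the mixed case $i\le s < j$ is exactly what is guaranteed by condition (ii) of Definition \ref{shedding face}, since $\b_i \in \F(\Gamma\backslash\a)$ and the Stanley-set starter beneath $\a+\c_{j-s}$ lies in $\F(\s_\Gamma\a)$.

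For the converse, suppose $\Gamma$ is shellable with shelling $\a_1,\dots,\a_r$ and associated Stanley decomposition $S_i = \b_i + \langle m_i\rangle$. I would take $\a := \b_r$ (the degree of the last Stanley piece) as a candidate shedding face; its $|\fpt^*|$ is bounded by the dimension, so for $k$ at least $\dim\Gamma$ the cardinality condition on $\fpt^*$ is automatic. I would argue that $\a_1,\dots,\a_{r-1}$ is still a shelling once stripped of any piece landing in $\langle\a_r\rangle$, giving a shelling of $\Gamma\backslash\a$, while $\lk_\Gamma\a$ inherits a shelling from the restriction of the $S_i$ containing $\a$ (using that such an $S_i$ has the form $\a + \langle m_i\rangle$ after translation, so passing to $\lk_\Gamma\a$ shifts these pieces to Stanley sets rooted at $0$). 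Both are shellable multicomplexes with fewer facets and so are $k$-decomposable by induction, for $k$ large enough (taking the maximum over the two recursions); finally, the shedding-face conditions (i) and (ii) translate directly from conditions (1) and (2) of the shelling.

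The main obstacle I anticipate is the bookkeeping in the forward direction: rigorously showing that for $i = s+j$ the set $\langle\a+\c_j\rangle\setminus(\Gamma\backslash\a)$ is really what the shedding-face axiom gives, because $\langle\a+\c_j\rangle$ is only one of possibly many facets of $\s_\Gamma\a$ and the translate structure requires carefully comparing $\infpt(\c_j)$ with $\infpt(m)$ where $\langle\a+\c_j\rangle\setminus(\Gamma\backslash\a)=\a+\langle m\rangle$. A secondary subtlety, in the converse direction, is proving that $\lk_\Gamma\a$ is actually shellable as a multicomplex in the Herzog--Popescu sense (not merely covered by Stanley sets), which requires checking that the induced ordering satisfies both axioms of Definition of shelling; here I expect to use condition (2) of the original shelling to force the induced Stanley decomposition of $\lk_\Gamma\a$ to respect the required inclusion-or-order condition.
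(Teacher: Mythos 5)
Your proposal follows essentially the same route as the paper: induction on the number of facets, concatenating the shelling of $\Gamma\backslash\a$ with the $\a$-translated shelling of $\lk_\Gamma\a$ and verifying shelling conditions (1) and (2) via shedding-face conditions (i) and (ii), and conversely taking the degree of the last Stanley set $S_r=\a+\langle m\rangle$ as the shedding face. The only simplification you miss in the converse is that $\s_\Gamma(\a)=\langle\a_r\rangle$ has a single facet, so the shellability of $\lk_\Gamma\a$ that you worry about is immediate.
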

\begin{proof}
Let $\Gamma$ be $k$-decomposable. If $\Gamma$ has only one facet then we are done. Suppose that $|\F(\Gamma)|>1$. Let $\a\in\Gamma$ be a shedding face of $\Gamma$ with $|\fpt^\ast(\a)|\leq k+1$ such that $\lk_\Gamma\a$ and $\Gamma\backslash\a$ are $k$-decomposable. By induction, $\lk_\Gamma(\a)$ and $\Gamma\backslash\a$ are shellable. Let $\a_1,\ldots,\a_t$ and $\a_{t+1}-\a,\ldots,\a_r-\a$ be, respectively, shelling orders of $\Gamma\backslash\a$ and $\lk_\Gamma\a$. It is easy to check that $\a_{t+1},\ldots,\a_r$ is a shelling order of $\s_\Gamma\a$. We claim that $\a_1,\ldots,\a_r$ is a shelling order of $\Gamma$.

We want to show that $S_i=\langle \a_i\rangle\backslash\langle \a_1,\ldots,\a_{i-1}\rangle$ is a Stanley set, for al $i$. The case $i\leq t$ is clear. Suppose that $i>t$. Clearly,
$$\langle \a_i\rangle\backslash\langle \a_1,\ldots,\a_{i-1}\rangle=(\langle \a_i\rangle\backslash\langle \a_1,\ldots,\a_t\rangle)\cap(\langle \a_i\rangle\backslash\langle \a_{t+1},\ldots,\a_{i-1}\rangle).$$
Because $\Gamma$ is $k$-decomposable we have $\langle\a_i\rangle\backslash\langle \a_1,\ldots,\a_t\rangle=\a+\langle m\rangle$ where $m\in\{0,\infty\}^n$. Moreover, $\s_\Gamma(\a)$ is shellable and hence there exist $\a'\in\NN^n$ with $|\fpt^*(\a')|\leq k+1$ and $m'\in\{0,\infty\}^n$ such that
\begin{center}
$\langle\a_i\rangle\backslash\langle \a_{t+1},\ldots,\a_{i-1}\rangle=\a'+\langle m'\rangle$.
\end{center}
It is clear that $m=m'$. Therefore $\langle \a_i\rangle\backslash\langle \a_1,\ldots,\a_{i-1}\rangle=\a\vee\a'+\langle m\rangle$.

Let $S^\ast_i\subseteq S^\ast_j$. If $i,j\leq t$ or $t\leq i,j$ then we are done, because $\s_\Gamma(\a)$ and $\Gamma\backslash\a$ are shellable. Suppose that $i\leq t<j$. Since $\infpt(\a_j)=\infpt(S^\ast_j)$ and $\infpt(\a_i)=\infpt(S^\ast_i)$ we have $\fpt(\a_j)\subseteq\fpt(\a_i)$. But $\fpt(\a_j)=\fpt(\a_i)$, because $\a_j\in\F(\s_\Gamma(\a))$ and $\a_i\in\F(\Gamma\backslash\a)$. Consequently, $\infpt(\a_j)=\infpt(\a_i)$ and so $S^\ast_i=S^\ast_j$, as desired.

For the second part of theorem, suppose that $\Gamma$ is shellable with the shelling $\a_1,\ldots,\a_r$. If $r=1$ then $\Gamma$ is $k$-decomposable for some $k\geq 0$. So assume that $r>1$. We proceed by induction on the number of facets of $\Gamma$. Since $S_r=\langle\a_r\rangle\backslash\langle\a_1,\ldots,\a_{r-1}\rangle$ is a Stanley set, so there exists $\a\in\NN^n $ and $m\in\{0,\infty\}^n$ such that $S_r=\a+\langle m\rangle$. Let $|\fpt^*(\a)|\leq k+1$ for some $k\geq 0$. It is clear that $\s_\Gamma(\a)=\langle\a_r\rangle$ and $\Gamma\backslash\a=\langle\a_1,\ldots,\a_{r-1}\rangle$. By induction hypothesis, $\Gamma\backslash\a$ is $k'$-decomposable for some $k'\geq 0$. Assume that $k\geq k'$. If we show that $\a$ satisfies the condition (ii) of Definition \ref{shedding face} then we have shown that $\a$ is a shedding face of $\Gamma$.

Let $i<r$ and $\fpt(\a_r)\subseteq\fpt(\a_i)$. Then $\infpt(\a_i)\subseteq\infpt(\a_r)$ and so $S^\ast_i\subseteq S^\ast_r$. It follows that $S^\ast_i=S^\ast_r$. This implies that $\fpt(\a_r)=\fpt(\a_i)$, as the desired.
\end{proof}

For $F\subset [n]$ define $\a_F\in\NN^n_\infty$ by $\a_F(i)=\infty$ if $i\in F$ and $\a_F(i)=0$, otherwise. Also, for $\a\in\{0,\infty\}^n$ set $F_\a=\{i\in[n]:\a(i)=\infty\}$. The next result shows that our definition of $k$-decomposability of multicomplexes
extends the concept of $k$-decomposability of simplicial complexes defined in \cite{BiPr,Wo}.

\begin{prop}\label{Sim-multi k-decom}
Let $\Del$ be a simplicial complex with facets $F_1,\ldots,F_r$, and $\Gamma$ be the multicomplex with the facets $\a_{F_1},\ldots,\a_{F_r}$. Then $\Del$ is $k$-decomposable if and only if $\Gamma$ is $k$-decomposable.
\end{prop}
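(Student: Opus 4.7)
The plan is to establish a bijective correspondence between the shedding faces of $\Delta$ (in Woodroofe's sense) and the shedding faces of $\Gamma$ (Definition~\ref{shedding face}) via $\sigma\leftrightarrow\b_\sigma$, where $\b_\sigma\in\{0,1\}^n$ is the characteristic vector of $\sigma$, and then prove both implications by simultaneous induction on the pair (number of facets, number of vertices). Two preliminary observations set the stage: since every maximal facet $\a_{F_i}$ lies in $\{0,\infty\}^n$, the definition of ``facet'' forces $\F(\Gamma)=\M(\Gamma)=\{\a_{F_1},\ldots,\a_{F_r}\}$; and any shedding face $\a\in\Gamma\cap\NN^n$ of $\Gamma$ must satisfy $\a\in\{0,1\}^n$. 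Indeed, if $\a(j_0)\geq 2$ and $\b=\a_{F_i}$ with $\supp(\a)\subseteq F_i$, then $\b_{\supp(\a)}$ lies in $\langle\b\rangle\setminus(\Gamma\setminus\a)$ with $j_0$-coordinate $1<\a(j_0)$, contradicting this set being the Stanley set $\a+\langle m\rangle$ of degree $\a$. Consequently every shedding face of $\Gamma$ has the form $\b_\sigma$ with $\sigma=\supp(\a)\in\Delta$ and $|\sigma|\leq k+1$.

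The geometric crux is the following lemma, used in both directions: $\sigma\in\Delta$ is a shedding face of $\Delta$ if and only if $\Delta\setminus\sigma=\langle F_i:\sigma\not\subseteq F_i\rangle$, i.e.\ every facet of $\Delta\setminus\sigma$ is already a facet of $\Delta$. For the forward direction, if $G$ is maximal in $\Delta\setminus\sigma$ but not a facet of $\Delta$, choose a facet $F\supsetneq G$ of $\Delta$ (necessarily $\sigma\subseteq F$) and $v\in\sigma\setminus G$; Woodroofe's exchange property yields $w\notin F$ such that $(F\cup\{w\})\setminus\{v\}\in\Delta\setminus\sigma$ is a face of size $|F|>|G|$ strictly containing $G$, a contradiction. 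For the converse, given $\tau\supseteq\sigma$ and $v\in\sigma$, the face $\tau\setminus\{v\}\in\Delta\setminus\sigma$ lies in some facet $F$ with $\sigma\not\subseteq F$; then $v\notin F$ and any $w\in F\setminus\tau$ witnesses the exchange. With this lemma, condition (ii) of Definition~\ref{shedding face} for $\b_\sigma$ is vacuous (since $\fpt(\a_F)=[n]\setminus F$ and the facets of $\Delta$ are incomparable), while condition (i) at $\b=\a_{F_i}$ (with $\sigma\subseteq F_i$) reduces to the identity $\langle\a_{F_i}\rangle\setminus(\Gamma\setminus\b_\sigma)=\b_\sigma+\langle m_{F_i}\rangle$, where $m_{F_i}(j)=\infty$ for $j\in F_i$ and $0$ otherwise, visibly a Stanley set of degree $\b_\sigma$. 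Hence $\sigma$ is a shedding face of $\Delta$ iff $\b_\sigma$ is a shedding face of $\Gamma$.

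For the inductive step, direct computation gives $\Gamma\setminus\b_\sigma=\langle\a_{F_i}:\sigma\not\subseteq F_i\rangle$ and $\lk_\Gamma\b_\sigma=\langle\a_{F_i}:\sigma\subseteq F_i\rangle$. The first is exactly the multicomplex attached to $\Delta\setminus\sigma$, has strictly fewer facets than $\Gamma$ (when $\sigma\neq\emptyset$), and is handled by the inductive hypothesis. The main obstacle is $\lk_\Gamma\b_\sigma$, which naively corresponds not to $\lk_\Delta\sigma$ but to the star $\s_\Delta\sigma=\sigma\ast\lk_\Delta\sigma$, and this star can coincide with $\Delta$ when $\sigma$ is contained in every facet, derailing a naive facet-count induction. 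I circumvent this by invoking Theorem~\ref{join}: after relabelling vertices so that $\sigma=\{1,\ldots,|\sigma|\}$, one writes $\lk_\Gamma\b_\sigma=\langle\a_\sigma\rangle\cdot\Gamma'$, where $\Gamma'$ is supported on $[n]\setminus\sigma$ with facets $\a_{F_i\setminus\sigma}$, so that $\Gamma'$ is the multicomplex attached to $\lk_\Delta\sigma$. The factors are disjoint in the sense of the paper, both have facets in $\{0,\infty\}^n$, and $\langle\a_\sigma\rangle$ is $k$-decomposable by Theorem~\ref{one max-facet} since $|\fpt^\ast(\a_\sigma)|=0$. Theorem~\ref{join} then gives that $\lk_\Gamma\b_\sigma$ is $k$-decomposable iff $\Gamma'$ is, and induction on the strictly smaller data $(\lk_\Delta\sigma,\Gamma')$ on $[n]\setminus\sigma$ completes the proof in both directions.
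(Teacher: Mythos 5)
Your proof is correct, and although it follows the same overall template as the paper's argument (induction on the number of facets, matching shedding faces $\sigma\leftrightarrow\b_\sigma$), the technical route differs in ways worth recording. First, you isolate and prove the characterization ``$\sigma$ is a shedding face of $\Del$ iff every facet of $\Del\setminus\sigma$ is a facet of $\Del$''; the paper uses essentially this fact in its ``if'' direction but never states or proves it, and in its ``only if'' direction it instead verifies condition (ii) of Definition~\ref{shedding face} by invoking the exchange property, whereas you observe that (ii) is simply vacuous because all the relevant $\fpt$-sets are complements of pairwise incomparable facets of $\Del$ --- a genuine simplification. Your preliminary reduction showing that a shedding face of $\Gamma$ must be a $0$--$1$ vector is also made explicit, where the paper silently replaces $\a$ by $e_{\fpt^\ast(\a)}$. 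Most significantly, you are right that $\lk_\Gamma\b_\sigma=\langle\a_{F_i}:\sigma\subseteq F_i\rangle$ is the multicomplex of the closed star $\s_\Del\sigma$ rather than of $\lk_\Del\sigma$ (since $\infty-1=\infty$); the paper's claim $\lk_\Gamma e_\sigma=\langle\a_F:F\in\F(\lk_\Del\sigma)\rangle$ is not literally correct as written, and your factorization $\lk_\Gamma\b_\sigma=\langle\a_\sigma\rangle\cdot\Gamma'$ combined with Theorem~\ref{join} repairs the induction cleanly, so the detour through the join theorem is a fix rather than an overcomplication. The one place you are terse is the biconditional ``condition (i) holds iff the deletion--facet criterion holds'': for the direction needed in the ``if'' part you should add that if some facet $G$ of $\Del\setminus\sigma$ is not a facet of $\Del$, then $\a_G$ lies in $\langle\a_{F_i}\rangle\setminus(\Gamma\setminus\b_\sigma)$ for any facet $F_i\supseteq G$ yet is not $\succeq\b_\sigma$, so the set cannot be a Stanley set of degree $\b_\sigma$; this is a one-line addition, not a gap.
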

\begin{proof}
``Only if part'': We use induction on the number of the facets of $\Del$. Let $\Del$ be $k$-decomposable with shedding face $\sigma\in\Del$. We claim that $e_\sigma=\sum_{i\in F}e_i$ is a shedding face of $\Gamma$ where $e_i$ denotes the $i$th standard unit vector in $\NN^n$. Clearly, $|\fpt^\ast(e_\sigma)|\leq k+1$. Note that
$$\lk_\Gamma e_\sigma=\langle \a_F:F\in\F(\lk_\Del\sigma)\rangle$$
and
$$\Gamma\backslash e_\sigma=\langle\a_F:F\in\F(\Del\backslash\sigma)\rangle.$$
By induction, $\lk_\Gamma e_\sigma$ and $\Gamma\backslash e_\sigma$ are $k$-decomposable.

Let $\a_F\in \F(\s_\Gamma e_\sigma)$. Then $$\langle\a_F\rangle\backslash(\Gamma\backslash e_\sigma)=\{u\in\Gamma:e_\sigma\preceq u\preceq\a_F\}=e_\sigma+\langle\a_F\rangle.$$
Therefore $\langle\a_F\rangle\backslash(\Gamma\backslash e_\sigma)$ is a Stanley set of degree $e_\sigma$.

Consider $\b\in\F(\s_\Gamma e_\sigma)$ and $\c\in\F(\Gamma\backslash e_\sigma)$ with $\fpt(\b)\subseteq\fpt(\c)$. If $\fpt(\b)\neq\fpt(\c)$ then $F_\c\subsetneqq F_\b$ and so there exists $x\in\sigma$ such that $x\in F_\b\backslash F_\c$. Particularly, $F_\b\backslash x$ is a facet of $\s_\Del\sigma\backslash\sigma$ and $\Del\backslash\sigma$. This contradicts the assumption that $\sigma$ is a shedding face of $\Del$. Therefore $\fpt(\b)=\fpt(\c)$.

``If part'': If $r=1$ then we are done. Assume that $r>1$ and suppose that $\Gamma$ is $k$-decomposable and $\a\in\NN^n$ is a shedding face of $\Gamma$ with $|\fpt^\ast(\a)|\leq k+1$.

Set $\sigma=\fpt^\ast(\a)$. Since $\lk_\Gamma e_\sigma=\lk_\Gamma\a$ and $\Gamma\backslash\ e_\sigma=\Gamma\backslash\a$ thus $\lk_\Gamma e_\sigma$ and $\Gamma\backslash e_\sigma$ are $k$-decomposable. Hence by induction hypothesis, $\lk_\Del\sigma$ and $\Del\backslash\sigma$ are $k$-decomposable. It remains to show that $\sigma$ satisfies the exchange property. Let $F$ be a facet of both $\s_\Del\sigma\backslash\sigma$ and $\Del\backslash\sigma$. Then there exists a facet $G\in\s_\Del\sigma$ and $x\in\sigma$ such that $F=G\backslash x$. Clearly, $\infpt(\a_F)\subsetneqq\infpt(\a_G)$. It follows that $\fpt(\a_G)\subsetneqq\fpt(\a_F)$. This is a contradiction, because $\a_G\in \F(\s_\Gamma e_\sigma)$ and $\a_F\in \F(\Gamma\backslash e_\sigma)$. Therefore $\sigma$ is a shedding face of $\Del$.
\end{proof}

For the simplicial complexes
$\Del_1$ and $\Del_2$ defined on disjoint vertex sets, the \emph{join}
of $\Del_1$ and $\Del_2$ is $\Del_1.\Del_2=\{\sigma\cup\tau:\
\sigma\in\Del_1,\tau\in\Del_2\}$.

Theorem \ref{join} together with Proposition \ref{Sim-multi k-decom} now yields

\begin{cor}\label{join sim. com}
Let $\Del_1$ and $\Del_2$ be simplicial complexes on disjoint vertex sets. Then $\Del_1$ and $\Del_2$ are $k$-decomposable if and only if $\Del_1\cdot\Del_2$ is $k$-decomposable.
\end{cor}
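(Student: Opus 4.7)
The plan is to reduce the corollary to Theorem \ref{join} by passing from simplicial complexes to their associated multicomplexes, using Proposition \ref{Sim-multi k-decom} as the bridge.

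First I would set up the correspondence. Let $\Gamma_i$ be the multicomplex associated to $\Del_i$, that is, the one with facets $\{\a_F : F \in \F(\Del_i)\}$. Since $\Del_1$ and $\Del_2$ are supported on disjoint vertex sets, say on $\{x_1,\ldots,x_m\}$ and $\{x_{m+1},\ldots,x_n\}$ respectively, every facet $\a_F$ of $\Gamma_1$ vanishes on coordinates $i>m$ and every facet $\a_G$ of $\Gamma_2$ vanishes on coordinates $i\leq m$; hence $\Gamma_1$ and $\Gamma_2$ are disjoint multicomplexes in the sense preceding Theorem \ref{join}. Crucially, all facets of both $\Gamma_1$ and $\Gamma_2$ lie in $\{0,\infty\}^n$, so the extra hypothesis $\F(\Gamma_2)\subset\{0,\infty\}^n$ needed for the converse part of Theorem \ref{join} is automatic.

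Next I would check that the multicomplex associated to the simplicial join $\Del_1\cdot\Del_2$ is exactly the multicomplex join $\Gamma_1\cdot\Gamma_2$. Indeed, the facets of $\Del_1\cdot\Del_2$ are the unions $F\cup G$ with $F\in\F(\Del_1)$ and $G\in\F(\Del_2)$, and because $F$ and $G$ involve disjoint sets of variables one has $\a_{F\cup G}=\a_F+\a_G$. Consequently, the set of facets of the associated multicomplex coincides with $\{\a_F+\a_G:\a_F\in\F(\Gamma_1),\a_G\in\F(\Gamma_2)\}=\F(\Gamma_1\cdot\Gamma_2)$.

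Now the proof assembles quickly. By Proposition \ref{Sim-multi k-decom} applied to $\Del_1\cdot\Del_2$, this complex is $k$-decomposable if and only if $\Gamma_1\cdot\Gamma_2$ is $k$-decomposable. By Theorem \ref{join}, together with the observation that $\F(\Gamma_2)\subset\{0,\infty\}^n$, this is equivalent to both $\Gamma_1$ and $\Gamma_2$ being $k$-decomposable. A final application of Proposition \ref{Sim-multi k-decom} to each $\Del_i$ translates this back to the $k$-decomposability of $\Del_1$ and $\Del_2$ individually, completing the argument.

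There is no real obstacle here; the entire content of the corollary is packaged in Theorem \ref{join} and Proposition \ref{Sim-multi k-decom}. The only point to handle carefully is the identification $\a_{F\cup G}=\a_F+\a_G$, which relies on the disjointness of vertex sets, and the verification that the $\{0,\infty\}$-valued hypothesis of Theorem \ref{join} is automatic in this simplicial-to-multicomplex setting.
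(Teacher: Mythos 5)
Your proposal is correct and follows exactly the route the paper intends: the paper derives this corollary directly from Theorem \ref{join} and Proposition \ref{Sim-multi k-decom}, and you have simply filled in the (straightforward) details, namely the identification $\a_{F\cup G}=\a_F+\a_G$ and the observation that the hypothesis $\F(\Gamma_2)\subset\{0,\infty\}^n$ is automatic for multicomplexes arising from simplicial complexes.
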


\section{The main results}

In this section we present the main results of the paper. For the proof of the first main theorem we need the following lemma whose proof is easy and we leave without proof.

\begin{lem}\label{ideal of lk-dl-st}
Let $\Gamma$ be a multicomplex and $\a\in\Gamma$. Then
\begin{center}
$I(\Gamma\backslash\a)=I(\Gamma)+Sx^\a$ and $I(\lk_\Gamma\a)=I(\Gamma):x^\a$.
\end{center}
It follows that for a monomial ideal $I$ and a monomial $x^\a\in I$ we have
\begin{center}
    $\Gamma(I:x^\a)=\lk_{\Gamma(I)}\a$ and $\Gamma(I+Sx^\a)=\Gamma(I)\backslash\a$.
\end{center}
\end{lem}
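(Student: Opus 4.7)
The plan is to verify both ideal identities directly by comparing their monomial generators, and then to deduce the two multicomplex identities formally, by applying $\Gamma(-)$ and using the uniqueness $\Gamma(I(\Gamma')) = \Gamma'$ recorded in the preliminaries.

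For the first identity, $I(\Gamma\backslash\a) = I(\Gamma) + Sx^\a$, I would use the description $\Gamma\backslash\a = \{\c \in \Gamma : \a \npreceq \c\}$ (which one checks coincides with the paper's definition as the downward closure of the facets of $\Gamma$ not dominating $\a$). With this reading, $\c \notin \Gamma\backslash\a$ iff either $\c \notin \Gamma$ or $\a \preceq \c$. Hence every monomial generator of $I(\Gamma\backslash\a)$ is either a generator of $I(\Gamma)$ or a multiple of $x^\a$, and conversely every monomial in $I(\Gamma) + Sx^\a$ fits one of these two descriptions, so the two ideals coincide.

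For $I(\lk_\Gamma\a) = I(\Gamma):x^\a$, the key step is the reformulation $\c \in \lk_\Gamma\a$ iff $\a+\c \in \Gamma$. I would derive this from the paper's alternative form $\lk_\Gamma\a = \{\a\vee\b - \a : \a\vee\b \in \Gamma\}$ by taking $\b = \a+\c$ and noting that $\c \succeq \mathbf{0}$ forces $\a\vee\b = \a+\c$; the converse direction is analogous. With this in hand, the chain
\[
x^\c \in I(\Gamma):x^\a \iff x^{\a+\c} \in I(\Gamma) \iff \a+\c \notin \Gamma \iff \c \notin \lk_\Gamma\a \iff x^\c \in I(\lk_\Gamma\a)
\]
closes the identity.

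The second half is then formal: setting $\Gamma = \Gamma(I)$ in the two identities (and using $I(\Gamma(I)) = I$) gives $I + Sx^\a = I(\Gamma(I)\backslash\a)$ and $I:x^\a = I(\lk_{\Gamma(I)}\a)$, and applying $\Gamma(-)$ together with the uniqueness property yields $\Gamma(I+Sx^\a) = \Gamma(I)\backslash\a$ and $\Gamma(I:x^\a) = \lk_{\Gamma(I)}\a$. The whole argument is a direct unpacking of definitions, and there is no real obstacle; the only step that merits a brief justification is the set-theoretic description of $\Gamma\backslash\a$ used at the start.
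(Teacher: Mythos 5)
Your overall strategy --- verify the two ideal identities monomial by monomial, then transfer them through $\Gamma(-)$ using the uniqueness $I(\Gamma(I))=I$ --- is the natural one, and the paper itself supplies no proof to compare against. The link half is fully correct: every face of $\Gamma$ lies under a facet, and any facet dominating $\a+\c$ automatically dominates $\a$, so the paper's $\lk_\Gamma\a=\langle\b-\a:\b\in\F(\Gamma),\ \a\preceq\b\rangle$ really does coincide with $\{\c:\a+\c\in\Gamma\}$ and your chain of equivalences closes without incident.

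The gap sits exactly at the step you defer to ``one checks'': the asserted coincidence of $\langle\b\in\F(\Gamma):\a\npreceq\b\rangle$ with $\{\c\in\Gamma:\a\npreceq\c\}$ is false in general, because a face that fails to dominate $\a$ may lie only under facets that do dominate $\a$. Concretely, take $n=3$, $\Gamma=\langle(\infty,\infty,0),(0,0,\infty)\rangle$ and $\a=(1,0,0)$. Then $\F(\Gamma)=\{(\infty,\infty,0),(0,0,\infty)\}$, the only facet not dominating $\a$ is $(0,0,\infty)$, so the paper's deletion is $\langle(0,0,\infty)\rangle$; but $(0,\infty,0)$ lies in $\{\c\in\Gamma:\a\npreceq\c\}$ and not in $\langle(0,0,\infty)\rangle$. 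Here $I(\Gamma)=(x_1x_3,x_2x_3)$ and $I(\Gamma)+Sx_1=(x_1,x_2x_3)$, whose multicomplex is $\langle(0,\infty,0),(0,0,\infty)\rangle$ --- that is, your set, not the facet-generated one, whose ideal is $(x_1,x_2)$. So your argument correctly proves the identity for the set-theoretic deletion $\{\c\in\Gamma:\a\npreceq\c\}$ (which is what the later applications of the lemma actually require), while the identification with the definition as stated in the paper cannot be ``checked'': it fails, and with it the lemma as literally stated. To repair the write-up you must either adopt $\Gamma\backslash\a=\{\c\in\Gamma:\a\npreceq\c\}$ as the working definition, or isolate and prove the extra condition that every face of $\Gamma$ not dominating $\a$ lies under a facet not dominating $\a$ in the situations where the lemma is invoked.
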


We are prepare to prove the first main result of this section which is an improvement of \cite[Theorem 10.5.]{HePo}.

\begin{thm}\label{k-decom pretty}
Let $\Gamma$ be a multicomplex. Then $\Gamma$ is $k$-decomposable if and only if $I(\Gamma)$ is pretty $k$-clean.
\end{thm}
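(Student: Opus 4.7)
The plan is an induction on the number of facets of $\Gamma$ (equivalently, on the length of the shortest pretty cleanness chain for $I(\Gamma)$), with Lemma \ref{ideal of lk-dl-st} as the dictionary between the combinatorics of $\Gamma$ and the algebra of $I(\Gamma)$. The base case pairs ``$I(\Gamma)$ is a monomial prime $(x_i:i\in F)$'' with ``$\Gamma$ has a unique facet lying in $\{0,\infty\}^n$''; these are the terminal conditions of the two recursive definitions. The central tool throughout is the facet--prime correspondence $\ass(I(\Gamma'))=\{P_{\fpt(\b)}:\b\in\F(\Gamma')\}$ (with $P_F=(x_i:i\in F)$), which comes from the irreducible decomposition $I(\Gamma')=\bigcap_{\b\in\F(\Gamma')}(x_i^{b_i+1}:i\in\fpt(\b))$.

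For the forward direction, let $\a$ be a shedding face of $\Gamma$ and set $u=x^\a$, so $|\supp(u)|=|\fpt^\ast(\a)|\le k+1$. By Lemma \ref{ideal of lk-dl-st}, $I(\Gamma):u=I(\lk_\Gamma\a)$ and $I(\Gamma)+Su=I(\Gamma\backslash\a)$, which are pretty $k$-clean by the inductive hypothesis. The facets of $\lk_\Gamma\a$ correspond bijectively (via $\gamma=\b-\a$, with $\fpt(\gamma)=\fpt(\b)$) to the facets of $\Gamma$ containing $\a$, i.e.\ to $\F(\s_\Gamma\a)$, while the facets of $\Gamma\backslash\a$ give the other side. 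Through the facet--prime correspondence, condition (ii) of Definition \ref{shedding face} transcribes exactly to the pretty cleaner property $P\subseteq Q\Rightarrow P=Q$ for $P\in\ass(I:u)$ and $Q\in\ass(I+Su)$. Hence $u$ is a pretty cleaner and $I(\Gamma)$ is pretty $k$-clean.

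For the reverse direction, let $u=x^\a$ be a pretty cleaner of $I(\Gamma)$ with $|\supp(u)|\le k+1$ such that $I:u$ and $I+Su$ are pretty $k$-clean. By Lemma \ref{ideal of lk-dl-st} and induction, $\lk_\Gamma\a$ and $\Gamma\backslash\a$ are $k$-decomposable, and running the facet--prime correspondence in reverse converts the pretty cleaner property into condition (ii) of Definition \ref{shedding face} for $\a$. The main obstacle is condition (i): for every facet $\b\in\F(\s_\Gamma\a)$ with $\a\preceq\b$ we must have $\b-\a\in\{0,\infty\}^n$, so that $\langle\b\rangle\backslash(\Gamma\backslash\a)=\a+\langle\b-\a\rangle$ is a Stanley set of degree $\a$. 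This rigidity need not hold for an arbitrary pretty cleaner --- for example $u=x_1$ is a pretty cleaner of $(x_1^3)$ but $\a=(1,0)$ is not a shedding face of $\langle(2,\infty)\rangle$, while $u=x_1^2$ yields the genuine shedding face $(2,0)$. To handle this, I would select $u=x^\a$ componentwise maximal among pretty cleaners of $I(\Gamma)$ meeting the support bound and the recursive pretty $k$-cleanness of $I:u$ and $I+Su$. If some facet $\b\succeq\a$ satisfied $\a(i)<\b(i)<\infty$, then $\a+e_i\preceq\b\in\Gamma$ gives $x^{\a+e_i}\notin I(\Gamma)$; arguing that incrementing within $\supp(u)$ preserves the pretty cleaner condition (each step only removes primary components from $\ass(I:u)$ without introducing strict containments with primes of $\ass(I+Su)$) yields a strictly larger pretty cleaner, contradicting maximality of $\a$. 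This forces $\b-\a\in\{0,\infty\}^n$, establishes condition (i), and completes the induction. Rigorously justifying the preservation step under maximal enlargement is the delicate part on which the proof turns.
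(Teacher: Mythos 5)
Your ``only if'' direction is essentially the paper's: Lemma \ref{ideal of lk-dl-st} plus induction handles the two recursive branches, and the facet--prime correspondence converts condition (ii) of Definition \ref{shedding face} into the pretty cleaner condition. That half is fine. You have also correctly located the crux of the converse: condition (i) forces $\b-\a\in\{0,\infty\}^n$ for every $\b\in\F(\s_\Gamma\a)$, and an arbitrary pretty cleaner witnessing pretty $k$-cleanness need not satisfy this (your $(x_1^3)$ example is valid). This is a genuine subtlety; the paper's own proof handles condition (i) by a different route, namely the Herzog--Popescu equivalence between Stanley sets and cyclic quotients applied to the auxiliary multicomplex $\langle\Gamma\backslash\a,\b\rangle$, rather than by renormalizing the cleaner.

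The gap is that your repair does not work. Consider $I=(x_1x_2^3)\subset K[x_1,x_2]$ and $k=0$. Then $u=x_1$ is a legitimate first step of the recursion: it is a pretty cleaner ($\ass(I:x_1)=\{(x_2)\}$ and $\ass(I+Sx_1)=\{(x_1)\}$, with no strict containment), $|\supp(u)|=1$, and both $I:x_1=(x_2^3)$ and $I+Sx_1=(x_1)$ are pretty $0$-clean. But no $\a$ with $\fpt^\ast(\a)=\{1\}$ is a shedding face of $\Gamma(I)=\langle(0,\infty),(\infty,2)\rangle$: the facet $\b=(\infty,1)$ lies in $\F(\s_\Gamma\a)$ for every such $\a$, and $\b-\a=(\infty,1)\notin\{0,\infty\}^2$. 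The offending coordinate is $2\notin\supp(u)$, so it cannot be cured by ``incrementing within $\supp(u)$,'' and incrementing outside $\supp(u)$ would break the bound $|\supp(u)|\le k+1$. (The actual shedding face is $(0,2)$, i.e.\ the pretty cleaner $x_2^2$, whose support is disjoint from that of $x_1$.) So componentwise maximality is not the right invariant: one must change the cleaner, not enlarge it. Moreover, $\ass(I+Su)$ itself moves when $u$ does (since $I+Sux_i\subseteq I+Su$), so your claim that enlargement ``only removes primary components from $\ass(I:u)$'' controls only half of the pretty cleaner condition. As written, the reverse implication is not established.
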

\begin{proof}
``Only if part'': Let $\F(\Gamma)=\{\a_1,\ldots,\a_r\}$. If $r=1$, then $I(\Gamma)$ is a prime ideal and so we have no thing to prove. So suppose that $r>1$. Let $\Gamma$ be $k$-decomposable with shedding face $\a$. It follows from induction hypothesis and Lemma \ref{ideal of lk-dl-st} that $I(\Gamma):x^\a$ and $I(\Gamma)+Sx^\a$ are pretty $k$-clean.

Let $P\in \ass(I(\lk_\Gamma\a))$ and $Q\in \ass(I(\Gamma\backslash\a))$ with $P\subseteq Q$. Hence there exist $\b\in\F(\lk_\Gamma\a)$ and $\c\in\F(\Gamma\backslash\a)$ such that $P=\sqrt{I(\langle\b\rangle)}$ and $Q=\sqrt{I(\langle\c\rangle)}$ (see the proof of \cite[Theorem 10.1]{HePo}). Then $\fpt(\b+\a)\subseteq\fpt(\b)\subseteq\fpt(\c)$ where $\b+\a$ is a facet of $\s_\Gamma\a$. By $k$-decomposability of $\Gamma$, we have $\fpt(\b+\a)=\fpt(\c)$. In particular, $\fpt(\b)=\fpt(\c)$ and so $P=Q$. Therefore $x^\a$ is a pretty cleaner of $I(\Gamma)$, as desired.

``If part'': Let $I(\Gamma)$ be pretty $k$-clean. If $\Gamma$ has just one facet then we are done. Suppose that $\Gamma$ has more than one facet. Then $I(\Gamma)$ is not prime. Thus there exists a pretty cleaner monomial $x^\a\not\in I(\Gamma)$ with $|\supp(x^\a)|\leq k+1$ such that $I(\Gamma):x^\a$ and $I(\Gamma)+Sx^\a$ are pretty $k$-clean. It follows from Lemma \ref{ideal of lk-dl-st} and induction hypothesis that $\lk_\Gamma\a$ and $\Gamma\backslash\a$ are $k$-decomposable.

Let $\b\in\F(\s_\Gamma\a)$. We want to show that $\langle\b\rangle\backslash(\Gamma\backslash\a)$ is a Stanley set. By the proof of Theorem 10.6 and the discussion at the end of Section 6 from \cite{HePo}, $\langle\b\rangle\backslash(\Gamma\backslash\a)$ is a Stanley set if and only if $I(\Gamma\backslash\a)/I(\langle\Gamma\backslash\a,\b\rangle)$ is a cyclic quotient. Therefore it is enough to show that $I(\Gamma\backslash\a)/I(\langle\Gamma\backslash\a,\b\rangle)$ is a cyclic quotient. It is easy to check that $\langle\Gamma\backslash\a,\b\rangle$ is $k$-decomposable with shedding face $\a$ and so by the only if part, $I(\langle\Gamma\backslash\a,\b\rangle)$ is pretty $k$-clean. It follows from Theorem \ref{Equ. pretty k-clean} that $I(\langle\Gamma\backslash\a,\b\rangle)$ is pretty clean and so by \cite[Theorem 10.5.]{HePo}, $I(\Gamma\backslash\a)/I(\langle\Gamma\backslash\a,\b\rangle)\cong S/P$ is a cyclic quotient where $P=(x_i:i\in\fpt(\b))$, as desired.

Let $\b\in\F(\s_\Gamma\a)$ and $\c\in\F(\Gamma\backslash\a)$ with $\fpt(\b)\subset\fpt(\c)$. Set
$$\begin{array}{ccc}
    \b'(i)=\left\{
           \begin{array}{ll}
             0 & i\in\fpt(\b) \\
             \infty & \mbox{otherwise}
           \end{array}
         \right. & \mbox{and}&
\c'(i)=\left\{
           \begin{array}{ll}
             0 & i\in\fpt(\c) \\
             \infty & \mbox{otherwise}.
           \end{array}
         \right.
  \end{array}
$$
Clearly, $\b'\in\F(\s_\Gamma\a)$ and $\c'\in\F(\Gamma\backslash\a)$ with $\fpt(\b')\subset\fpt(\c')$. We have $\b'\vee\a-\a\in\F(\lk_\Gamma\a)$. Let $P=\sqrt{I(\langle \b'\vee\a-\a\rangle)}$ and $Q=\sqrt{I(\langle \c'\rangle)}$. Then $P\in \ass(I(\lk_\Gamma\a))$ and $Q\in\ass(I(\Gamma\backslash\a))$. Since $\fpt(\b'\vee\a-\a)=\fpt(\b')\subset\fpt(\c')$, we have $P\subseteq Q$. It follows that $P=Q$ and so $\fpt(\b)=\fpt(\b')=\fpt(\c')=\fpt(\c)$.

Therefore $\a$ is a shedding face of $\Gamma$.
\end{proof}

\begin{rem}
Note that Theorems \ref{link k-decom} and \ref{shell k-decom} are, respectively, combinatorial translations of Theorems \ref{colon} and \ref{Equ. pretty k-clean}.
\end{rem}



\begin{rem}\label{prt not k-prt}
Consider the simplicial complex $$\Del=\langle 124,125,126,135,136,145,236,245,256,345,346\rangle$$
on $[6]$. It was shown in \cite{Si} that $\Del$ is shellable but not vertex-decomposable. It follows from Proposition \ref{Sim-multi k-decom} that the multicomplex $\Gamma$ with $\F(\Gamma)=\{\a_F:F\in\F(\Del)\}$ is shellable but not $0$-decomposable. This means that a pretty $k$-clean ideal need not be pretty $k'$-clean for $k>k'$. To see more examples of shellable simplicial complexes which are not vertex-decomposable we refer the reader to \cite{Ha,MoTa}.
\end{rem}

Let $I\subset S$ be a monomial ideal. We denote by $\Gamma$ and $\Gamma^p$ the multicomplexes associated to $I$ and $I^p$, respectively. Soleyman Jahan \cite{So} showed that there is a bijection between the facets of $\Gamma$ and the facets of $\Gamma^p$. We recall some notions of the construction of $\Gamma^p$ from \cite{So}. Let $I\subset S$ be minimally generated by $u_1,\ldots,u_r$ and let $D\subset [n]$ be the set of elements $i\in [n]$ such that $x_i$ divides $u_j$ for at least one $j=1,\ldots,r$. Then we set
$$t_i=\max\{s:x^s_i\ \mbox{divides}\ u_j\ \mbox{at least for one}\ j\in [m]\}$$
if $i\in D$ and $t_i=1$, otherwise. Moreover we set $t=\sum^n_{i=1}t_i$. For every $\a\in\F(\Gamma)$, the facet $\bar{\a}\in\F(\Gamma^p)$ is defined as follows: if $\a(i)=\infty$ then set $\bar{\a}(ij)=\infty$ for all $1\leq j\leq t_i$, and if $\a(i)<t_i$ then set
$$\bar{\a}(ij)=\left\{
    \begin{array}{ll}
      0 & \mbox{if}\ j=\a(i)+1 \\
      \infty & \mbox{otherwise}.
    \end{array}
  \right.
$$
It was shown in \cite[Proposition 3.8.]{So} that the map
$$\begin{array}{rc}
    \beta: & \F(\Gamma)\longrightarrow \F(\Gamma^p)\\
     & \a\longmapsto\bar{\a}
  \end{array}
$$
is a bijection.

\begin{thm}\label{polar k-decom}
Let $\Gamma$ be the multicomplex associated to a monomial ideal $I$. Then $\Gamma$ is $k$-decomposable if and only if $\Gamma^p$ is $k$-decomposable.
\end{thm}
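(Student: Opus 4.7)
The plan is induction on the number of facets $|\F(\Gamma)|$, which equals $|\F(\Gamma^p)|$ via Soleyman Jahan's bijection $\beta$ recalled just before the theorem. In the base case $|\F(\Gamma)| = 1$, both multicomplexes have a single facet (which lies in $\{0,\infty\}^n$ by Remark \ref{one facet}) and are $k$-decomposable by definition.

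For the inductive step I would treat each direction separately, using shedding faces on one side to construct them on the other. In the $(\Rightarrow)$ direction, let $\a$ be a shedding face of $\Gamma$ with $|\fpt^*(\a)| \leq k+1$. Pick any $i_0 \in \fpt^*(\a)$ and define $\b_0 := e_{i_0, \a(i_0)} \in \NN^t$, a single polarized unit vector with $|\fpt^*(\b_0)| = 1 \leq k+1$. The key step is to verify, using Lemma \ref{ideal of lk-dl-st} applied to $\Gamma^p$ and $I^p$ together with the explicit formulas for polarization, that $\lk_{\Gamma^p}(\b_0)$ and $\Gamma^p \setminus \b_0$ are multicomplexes with strictly fewer facets that are themselves (isomorphic to) polarizations of sub-multicomplexes of $\Gamma$ closely related to $\lk_\Gamma(\a)$ and $\Gamma \setminus \a$. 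Applying the inductive hypothesis yields that both are $k$-decomposable. The Stanley-set condition (i) of Definition \ref{shedding face} then follows from the observation that every facet of $\Gamma^p$ lies in $\{0,\infty\}^t$, so the sets $\langle\bar\b\rangle \setminus (\Gamma^p \setminus \b_0)$ automatically take the simple form $\b_0 + \langle m\rangle$ with $m \in \{0,\infty\}^t$; condition (ii) transfers from the corresponding condition for $\a$ in $\Gamma$ via $\beta$.

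The $(\Leftarrow)$ direction is dual. A shedding face $\b$ of $\Gamma^p$ with $|\fpt^*(\b)| \leq k+1$ is projected back to $\NN^n$ by setting $\a(i) := \max\{j : \b(i,j) > 0\}$ (and $0$ otherwise); then $|\fpt^*(\a)| \leq |\fpt^*(\b)| \leq k+1$, and a symmetric analysis shows that $\a$ is a shedding face of $\Gamma$ whose link and deletion are $k$-decomposable by induction. The principal obstacle in either direction is the careful bookkeeping required to align the variable sets and the polarization parameters $t_i$ when passing between $I$ and $I^p$ under the operations $I \mapsto I : u$ and $I \mapsto I + Su$; because polarization is defined by a global rule on all generators, a local change like taking a colon with a single variable needs care to ensure the polarized variables line up. Once that identification is set up, the verification of the two shedding-face conditions reduces to direct consequences of the facet bijection $\beta$ and the fact that all facets of $\Gamma^p$ are squarefree.
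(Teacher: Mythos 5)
The forward direction of your argument has a genuine gap at its central step. You choose the shedding face of $\Gamma^p$ to be the single unit vector $\b_0=e_{i_0,\a(i_0)}$, and then assert that $\lk_{\Gamma^p}(\b_0)$ and $\Gamma^p\setminus\b_0$ are ``(isomorphic to) polarizations of sub-multicomplexes of $\Gamma$ closely related to $\lk_\Gamma(\a)$ and $\Gamma\setminus\a$,'' to which the induction hypothesis applies. This is false. Unwinding the facet bijection $\beta$: for a facet $\c$ of $\Gamma$ one has $\bar\c(i_0,\a(i_0))=0$ exactly when $\c(i_0)=\a(i_0)-1$, so $\b_0\preceq\bar\c$ iff $\c(i_0)\neq\a(i_0)-1$. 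Hence $\Gamma^p\setminus\b_0$ is generated by the facets $\bar\c$ with $\c(i_0)=\a(i_0)-1$ \emph{exactly}, whereas $(\Gamma\setminus\a)^p$ is generated by all $\bar\c$ with $\a\npreceq\c$; these sets are different in general. Concretely, take $I=(x_1^3)\subset K[x_1]$, so $\F(\Gamma)=\{0,1,2\}$ with shedding face $\a=2$, and $\b_0=e_{1,2}$. Then $\Gamma^p\setminus\b_0=\langle(\infty,0,\infty)\rangle$ has one facet while $\Gamma\setminus\a$ has two, and $\lk_{\Gamma^p}\b_0=\langle(0,\infty,\infty),(\infty,\infty,0)\rangle$ has two facets while $\lk_\Gamma\a$ has one; moreover $\lk_{\Gamma^p}\b_0$ is not the polarization of \emph{any} sub-multicomplex of $\Gamma$ (its facets would correspond to $0$ and $2$ but not $1$, and no multicomplex in $\NN_\infty$ has facet set $\{0,2\}$). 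So the induction hypothesis — which only compares a multicomplex with \emph{its own} polarization — cannot be invoked, and the $k$-decomposability of these two pieces does not follow from $\a$ being a shedding face of $\Gamma$. The same example also shows your claim that both pieces have strictly fewer facets is not what carries the induction.

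The paper avoids this by taking the ``fully polarized'' face $\a'$ with $\a'(ij)=1$ for $1\leq j\leq\a(i)$ (and $0$ otherwise), for which one checks $\a'\preceq\bar\c$ iff $\a\preceq\c$; this yields the exact identifications $\lk_{\Gamma^p}\a'=(\lk_\Gamma\a)^p$ and $\Gamma^p\setminus\a'=(\Gamma\setminus\a)^p$, so induction applies verbatim, and the two shedding conditions are then verified much as you describe (condition (ii) is essentially vacuous since all facets of $\Gamma^p$ lie in $\{0,\infty\}^t$). Your reverse-direction recipe $\a(i)=\max\{j:\b(i,j)>0\}$ also differs from the paper's $\a(i)=\sum_j\a'(ij)$; for the faces arising from the forward construction these agree, but for an arbitrary shedding face of $\Gamma^p$ you would need to justify your choice separately. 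If you want to keep a small-support shedding face (your $\b_0$ does have the virtue that $|\fpt^\ast(\b_0)|=1$, whereas the paper's $\a'$ can have $|\fpt^\ast(\a')|=\sum_i\a(i)>k+1$), you must supply an independent proof that the resulting link and deletion are $k$-decomposable rather than appealing to the induction hypothesis about polarizations.
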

\begin{proof}
``Only if part'': If $|\F(\Gamma)|=1$ then we have no thing to prove. Assume that $|\F(\Gamma)|>1$. Hence there exists a shedding face $\a\in\Gamma$ such that $\lk_\Gamma\a$ and $\Gamma\backslash\a$ are $k$-decomposable. We define $\a'\in\Gamma^p$ as follows: if $\a(i)=0$ then we set $\a'(ij)=0$ for all $1\leq j\leq t_i$ and if $\a(i)\neq 0$ then we set
$$\a'(ij)=\left\{
            \begin{array}{ll}
              1 & \mbox{if}\ 1\leq j\leq\a(i) \\
              0 & \mbox{otherwise.}
            \end{array}
          \right.
$$
It is easy to check that
\begin{center}
    $\lk_{\Gamma^p}\a'=(\lk_\Gamma\a)^p$ and $\Gamma^p\backslash\a'=(\Gamma\backslash\a)^p$.
\end{center}
By induction hypothesis, $\lk_{\Gamma^p}\a'$ and $\Gamma^p\backslash\a'$ are $k$-decomposable. Now we show that $\a'$ is a shedding face of $\Gamma^p$.

Let $\bar{\b}\in\F(\s_{\Gamma^p}\a')$. Then
$$\langle\bar{\b}\rangle\backslash(\Gamma^p\backslash\a')=\{u\in\Gamma^p:\a'\preceq u\preceq\bar{\b}\}=\a'+\langle\bar{\b}\rangle.$$

Let $\bar{\b}\in\F(\s_{\Gamma^p}\a')$ and $\bar{\c}\in\Gamma^p\backslash\a'$ such that $\fpt(\bar{\b})\subseteq\fpt(\bar{\c})$. Hence $\bar{\c}\preceq\bar{\b}$. Since both $\bar{\b}$ and $\bar{\c}$ are facets of $\Gamma^p$, we have $\bar{\b}=\bar{\c}$ and, moreover, $\fpt(\bar{\b})=\fpt(\bar{\c})$.

``If part'': Let $\Gamma^p$ be $k$-decomposable with the shedding face $\a'\in\NN^t$. Define $\a\in\NN^n$ by $\a(i)=\sum^{t_i}_{j=1}\a'(ij)$. In a similar argument to only if part, we can show that $\Gamma$ is $k$-decomposable with the shedding face $\a$.
\end{proof}

Let $\Gamma\subset\NN^n_\infty$ be a multicomplex with facets $\F(\Gamma)=\{\a_1,\ldots,\a_r\}$ where $\a_i\in\{0,\infty\}^n$. For all $i$, set $F_i=\{x_j:\a_i(j)=\infty\}$. We call $\Del=\langle F_1,\ldots,F_r\rangle$ the \emph{simplicial complex associated to} $\Gamma$.

We come to the second main result of the paper which improves Theorem 3.10. of \cite{So}.

\begin{cor}\label{polar of k-clean}
The monomial ideal $I$ is pretty $k$-clean if and only if $I^p$ is $k$-clean.
\end{cor}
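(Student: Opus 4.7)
The plan is to chain together the equivalences established earlier in the paper, using the multicomplex $\Gamma = \Gamma(I)$ associated to $I$ and the multicomplex $\Gamma^p = \Gamma(I^p)$ associated to its polarization as the bridge. Specifically, I would argue:
\[
I \text{ pretty } k\text{-clean} \;\iff\; \Gamma \text{ is } k\text{-decomposable} \;\iff\; \Gamma^p \text{ is } k\text{-decomposable} \;\iff\; I^p \text{ is } k\text{-clean}.
\]

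The first equivalence is immediate from Theorem \ref{k-decom pretty} applied to $\Gamma = \Gamma(I)$, and the second is Theorem \ref{polar k-decom}. For the third equivalence, I would exploit the fact that $I^p$ is a squarefree monomial ideal, so every facet of $\Gamma^p$ lies in $\{0,\infty\}^t$. Hence $\Gamma^p$ is precisely the multicomplex corresponding (via $F \mapsto \a_F$) to the simplicial complex $\Delta^p$ whose Stanley--Reisner ideal is $I^p$, i.e.\ $\Delta^p$ is the simplicial complex associated to $\Gamma^p$ as defined just before the corollary. Then Proposition \ref{Sim-multi k-decom} gives that $\Gamma^p$ is $k$-decomposable if and only if $\Delta^p$ is $k$-decomposable, while Theorem \ref{k-dec k-cl} gives that $\Delta^p$ is $k$-decomposable if and only if $I_{\Delta^p} = I^p$ is $k$-clean.

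There is essentially no genuine obstacle to overcome: each link of the chain has been set up in the preceding sections precisely so that this corollary falls out. The only point that requires a brief sentence of justification is the identification of $\Gamma^p$ with $\Delta^p$ viewed as a multicomplex, i.e.\ verifying that squarefreeness of $I^p$ forces $\F(\Gamma^p) \subset \{0,\infty\}^t$ so that the simplicial/multicomplex correspondence of Proposition \ref{Sim-multi k-decom} actually applies. After that observation, the proof is a two-line citation chain.
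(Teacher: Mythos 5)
Your proposal is correct and follows exactly the same chain of equivalences as the paper's proof: Theorem \ref{k-decom pretty}, then Theorem \ref{polar k-decom}, then Proposition \ref{Sim-multi k-decom} applied to the simplicial complex associated to $\Gamma(I)^p$, and finally Theorem \ref{k-dec k-cl}. Your extra remark that squarefreeness of $I^p$ forces $\F(\Gamma^p)\subset\{0,\infty\}^t$ is a point the paper leaves implicit, but the argument is the same.
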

\begin{proof}
$I$ is pretty $k$-clean if and only if $\Gamma(I)$ is $k$-decomposable (by Theorem \ref{k-decom pretty}) if and only if $\Gamma(I)^p$ is $k$-decomposable (by Theorem \ref{polar k-decom}) if and only if the simplicial complex $\Del$ associated to $\Gamma(I)^p$ is $k$-decomposable (by Proposition \ref{Sim-multi k-decom}) if and only if $I_\Del=I(\Gamma(I)^p)=I^p$ is $k$-clean (by Theorem \ref{k-dec k-cl}).
\end{proof}

Combining Theorem \ref{pretty k-clean} and Corollary \ref{polar of k-clean} we immediately
obtain the following result which implies that the converse of Theorem 3.3 of \cite{Ra} holds.

\begin{cor}
If $I\subset S$ is a monomial ideal which has no embedded prime ideal. Then $I$ is $k$-clean if and only if $I^p$ is $k$-clean.
\end{cor}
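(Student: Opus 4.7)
The plan is to obtain this corollary as an immediate consequence of the two results already cited in its statement, namely Theorem \ref{pretty k-clean} and Corollary \ref{polar of k-clean}, so essentially no new idea is required. What I would present is really just a clean chaining of equivalences, with a careful bookkeeping of which hypothesis is needed for which direction.

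For the forward direction, suppose $I$ is $k$-clean. The remark following Definition \ref{Equ. pretty k-clean} records the elementary fact that every $k$-clean monomial ideal is pretty $k$-clean (any cleaner monomial is automatically pretty cleaner when $\ass(I)=\min(I)$, since then embedded primes do not occur in $\ass(I:u)$ to cause a failure). Hence $I$ is pretty $k$-clean, and Corollary \ref{polar of k-clean} immediately yields that $I^p$ is $k$-clean. Note that the hypothesis ``no embedded prime ideals'' is not used in this direction.

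For the backward direction, suppose $I^p$ is $k$-clean. By Corollary \ref{polar of k-clean}, $I$ is pretty $k$-clean. Now invoke the hypothesis: since $I$ has no embedded prime ideals, we have $\ass(I)=\min(I)$. Therefore Theorem \ref{pretty k-clean} applies to the pretty $k$-clean ideal $I$ and gives that $I$ is $k$-clean. This completes the proof.

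There is essentially no combinatorial obstacle; the only step worth emphasizing in the write-up is why the converse needs the ``no embedded primes'' assumption at all, namely that pretty $k$-cleanness is strictly weaker than $k$-cleanness precisely because of embedded associated primes, and Theorem \ref{pretty k-clean} shows this is the only obstruction. Thus the corollary is best stated and proved as a two-line deduction combining the two cited results, observing that polarization $I \mapsto I^p$ produces a squarefree ideal (so that on the ``$I^p$-side'' the distinction between clean and pretty clean is vacuous), while on the ``$I$-side'' the extra assumption is exactly what identifies pretty $k$-cleanness with $k$-cleanness.
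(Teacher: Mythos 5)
Your proposal is correct and is essentially the same argument the paper intends: the paper derives this corollary as an immediate combination of Theorem \ref{pretty k-clean} and Corollary \ref{polar of k-clean}, exactly as you do. Your explicit bookkeeping of which direction uses the ``no embedded primes'' hypothesis is a helpful elaboration of the paper's one-line deduction, but not a different route.
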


\begin{cor}
Let $I\subset K[X]$ and $J\subset K[Y]$ be two monomial ideals. Then $I$ and $J$ are pretty $k$-clean if and only if $IJ$ is pretty $k$-clean.
\end{cor}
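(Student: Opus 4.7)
The plan is to assemble the two main equivalences already proved in the paper: the polarization equivalence of Corollary \ref{polar of k-clean}, and the join equivalence for simplicial complexes of Corollary \ref{join sim. com}. First I would verify that polarization is compatible with the construction $IJ$ on disjoint variable sets: every generator of $IJ$ factors as a product $uv$ with $u \in G(I)$ and $v \in G(J)$ on disjoint variable supports, so polarization acts independently on each factor and yields $(IJ)^p = I^p J^p$ in the ambient polynomial ring on all polarized variables (the thresholds $t_i$ used to polarize $IJ$ agree with those used for $I$ and $J$ separately, thanks to the disjointness of supports). Applying Corollary \ref{polar of k-clean} to each of $I$, $J$, and $IJ$ reduces the statement to its squarefree analogue: for squarefree monomial ideals $I^p$ and $J^p$ on disjoint variable sets, $I^p$ and $J^p$ are $k$-clean if and only if $I^p J^p$ is $k$-clean.

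Next, I would use Theorem \ref{k-dec k-cl} to pass from squarefree monomial ideals to their Stanley--Reisner simplicial complexes. Let $\Del_1$ and $\Del_2$ denote the Stanley--Reisner complexes of $I^p$ and $J^p$ on the disjoint (polarized) vertex sets $X^p$ and $Y^p$. The key combinatorial identification is that the Stanley--Reisner complex associated to $I^p J^p$ on $X^p \cup Y^p$ is the join $\Del_1 \cdot \Del_2$: the minimal non-faces of the join are precisely the disjoint unions of minimal non-faces coming from each side, whose squarefree monomial incarnations generate $I^p J^p$. With this identification, Corollary \ref{join sim. com} applies directly, giving that $\Del_1$ and $\Del_2$ are each $k$-decomposable if and only if $\Del_1 \cdot \Del_2$ is, which by Theorem \ref{k-dec k-cl} is in turn equivalent to the $k$-cleanness of $I^p J^p$.

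Chaining these equivalences produces the desired statement: $I, J$ pretty $k$-clean iff $I^p, J^p$ $k$-clean iff $\Del_1, \Del_2$ $k$-decomposable iff $\Del_1 \cdot \Del_2$ $k$-decomposable iff $I^p J^p$ $k$-clean iff $IJ$ pretty $k$-clean. The main technical obstacle is the combinatorial identification in the middle step, namely carefully tracking how the Stanley--Reisner correspondence behaves under the ideal construction $IJ$ across disjoint variable sets and matching it to the join $\Del_1 \cdot \Del_2$; once this identification is confirmed, the conclusion is an immediate synthesis of the previously established corollaries.
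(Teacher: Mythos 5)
Your proposal follows exactly the route the paper itself takes: polarize via Corollary \ref{polar of k-clean}, pass to Stanley--Reisner complexes via Theorem \ref{k-dec k-cl}, apply the join statement Corollary \ref{join sim. com}, and chain the equivalences back. The preliminary step $(IJ)^p=I^pJ^p$ is fine. But the step you yourself single out as the main technical obstacle --- identifying the Stanley--Reisner complex of $I^pJ^p$ with the join $\Del_1\cdot\Del_2$ --- is false, and your justification of it is where the error lies. For complexes on disjoint vertex sets, a set $F$ is a face of $\Del_1\cdot\Del_2$ exactly when $F\cap X\in\Del_1$ \emph{and} $F\cap Y\in\Del_2$; hence any minimal non-face $G$ of $\Del_1$ is, by itself, already a minimal non-face of the join (not only the disjoint unions $G_1\cup G_2$ of minimal non-faces from the two sides). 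So $I_{\Del_1\cdot\Del_2}=I_{\Del_1}+I_{\Del_2}$ in the big polynomial ring, not $I_{\Del_1}I_{\Del_2}$. Concretely, if $\Del_1=\{\emptyset\}$ on $\{x\}$ and $\Del_2=\{\emptyset\}$ on $\{y\}$, then $\Del_1\cdot\Del_2=\{\emptyset\}$ and $I_{\Del_1\cdot\Del_2}=(x,y)$, whereas the product is $(xy)$.

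As a result, the chain of equivalences you assemble proves the assertion for the \emph{sum} $I+J$ (whose polarization is $I^p+J^p=I_{\Del_1\cdot\Del_2}$), not for the product $IJ$. The complex whose Stanley--Reisner ideal is $I^pJ^p=I^p\cap J^p$ is the union of two cones, namely the join of $\Del_1$ with the full simplex on the $Y$-variables united with the join of the full simplex on the $X$-variables with $\Del_2$; Corollary \ref{join sim. com} says nothing about this complex, so an additional argument is needed to relate its $k$-decomposability to that of $\Del_1$ and $\Del_2$. I should note that the paper's own proof makes the identical identification $I_{\Del_1\cdot\Del_2}=I^pJ^p$ without comment, so this is a gap you have inherited rather than introduced --- but it is a genuine gap that your write-up, which explicitly rests the whole argument on this identification, does not close.
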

\begin{proof}
Let $I_{\Del_1}=I^p$ and $I_{\Del_2}=J^p$ for some disjoint simplicial complexes $\Del_1$ and $\Del_2$. $I$ and $J$ are pretty $k$-clean if and only if $I^p$ and $J^p$ are $k$-clean (by Corollary \ref{polar of k-clean}) if and only if $\Del_1$ and $\Del_2$ are $k$-decomposable (by Theorem \ref{k-dec k-cl}) if and only if $\Del_1.\Del_2$ is $k$-decomposable (by Theorem \ref{join sim. com}) if and only if $I_{\Del_1.\Del_2}=I^pJ^p=(IJ)^p$ is $k$-clean (by Corollary \ref{join sim. com}) if and only if $IJ$ is pretty $k$-clean (by Theorem \ref{polar of k-clean}).
\end{proof}

\ \\ \\
Rahim Rahmati-Asghar,\\
Department of Mathematics, Faculty of Basic Sciences,\\
University of Maragheh, P. O. Box 55181-83111, Maragheh, Iran.\\
E-mail:  \email{rahmatiasghar.r@gmail.com}


\begin{thebibliography}{1}


\bibitem{BiPr} L.J. Billera and J.S. Provan, \textit{Decompositions of simplicial complexes related to diameters of convex polyhedra}, Math. Oper. Res. \textbf{5}, no. 4, 576-594 (1980).

\bibitem{Dr} A. Dress, \textit{A new algebraic criterion for shellability}. Beitrage zur Alg. und Geom. \textbf{34}(1) 45-55 (1993).

\bibitem{Ha} M. Hachimori, \textit{Combinatorics of construtible complexes}, Ph.D. thesis, Univ. of Tokyo, Tokyo, (2000).

\bibitem{He} J. Herzog, \textit{A Survey on Stanley Depth}, In: Monomial Ideals, Computations and Applications, Volume 2083 of Lecture Notes in Mathematics, Springer, 2013.

\bibitem{HePo} J. Herzog, D. Popescu, \textit{Finite filtrations of modules and shellable multicomplexes}. Manuscripta Math. \textbf{121} (2006), no. 3, 385–410.



\bibitem{MoTa} S. Moriyama and F. Takeuchi, \textit{Incremental construction properties in dimension two: shellability, extendable shellability and vertex decomposability}, Discrete Math. \textbf{263}, 295-296 (2003).



\bibitem{Ra} R. Rahmati-Asghar, \textit{$k$-clean monomial ideals}. arXiv:1702.07574v1.



\bibitem{Si} R. S. Simon, \textit{Combinatorial properties of ``cleanness''}, J. Algebra, \textbf{167} (1994) 361-388.

\bibitem{So} A. Soleyman Jahan, \textit{Prime filtrations of monomial ideals and polarizations}, J. Algebra, \textbf{312}(2), (2007), 1011-1032.


\bibitem{St} R.P. Stanley, Combinatorics and Commutative Algebra, Birkh\"{a}user, 1983.

\bibitem{Wo} R. Woodroofe, \textit{Chordal and sequentially Cohen-Macaulay clutters}, Electron. J. Combin. \textbf{18} (2011), no. 1, \#P208.

\end{thebibliography}
\end{document}